\theoremstyle{plain}
\newtheorem{thm}{Theorem}[section]
\newtheorem{lem}[thm]{Lemma}
\newtheorem{cor}[thm]{Corollary}
\newtheorem{prop}[thm]{Proposition}
\theoremstyle{definition}
\newtheorem{definition}[thm]{Definition}
\newtheorem{expl}[thm]{Example}
\newtheorem{question}[thm]{Question}
\newtheorem{rem}[thm]{Remark}
\newcommand{\neutralize}[1]{\expandafter\let\csname c@#1\endcsname\count@}
\DeclareMathOperator{\R}{\mathbb{R}}
\DeclareMathOperator{\C}{\mathbb{C}}
\DeclareMathOperator{\N}{\mathbb{N}}
\DeclareMathOperator{\Spec}{\text{Spec}}
\DeclareMathOperator{\conc}{\mathbin{\ast}}
\DeclareMathOperator{\Tr}{\text{Tr}}
\newcommand{\one}{\mathbbold{1}}
    \DeclareFontFamily{U}{wncy}{}
    \DeclareFontShape{U}{wncy}{m}{n}{<->wncyr10}{}
    \DeclareSymbolFont{mcy}{U}{wncy}{m}{n}
    \DeclareMathSymbol{\Sha}{\mathord}{mcy}{"58}
\numberwithin{equation}{section}
\DeclareSymbolFont{bbold}{U}{bbold}{m}{n}
\DeclareSymbolFontAlphabet{\mathbbold}{bbold}
\newcommand{\ind}{\bm{1}}%{\mathbbold{1}}
\title{Joins of normal matrices, their spectrum, and applications}
  \author{J\'an Min\'a\v{c}, Lyle Muller,
  \\ Tung T. Nguyen, Federico W. Pasini}
\address{Department of Mathematics, Western University, London, Ontario, Canada N6A 5B7}
\email{minac@uwo.ca}
\address{Department of Mathematics, Western University, London, Ontario, Canada N6A 5B7}
\email{lmuller2@uwo.ca}
\address{Department of Mathematics, Western University, London, Ontario, Canada N6A 5B7}
 \email{tung.nguyen@uwo.ca}
\address{Huron University College, London Ontario, Canada N6A 5B7}
\email{fpasini@uwo.ca}
\date{\today}
\begin{document}
\maketitle
\begin{abstract}
Motivated by studies of oscillator networks, we study the spectrum of the join of several normal matrices with constant row sums. We apply our results to compute the characteristic polynomial of the join of several regular graphs. We then use this theorem to study several problems in spectral graph theory. In particular, we provide some simple constructions of Ramanujan graphs and give new proofs for some theorems in the classical book of Cvetkovi\'{c}, Rowlinson, and Slobodan. \\ 
\\
\noindent \textbf{Keywords.} Normal matrices,  graph spectra, graph energy, joined union of graphs\\
\noindent \textbf{MSC Codes.} 05C50, 15B05, 15A18
\end{abstract}

\section{Introduction}
Networks of nonlinear oscillators have attracted interest in several scientific domains such as theoretical physics, mathematical biology, power-grid systems, and many more. In our investigation of oscillator networks (see \cite{[KO3], [KO4], [CM1]}), the networks of several communities joined together often appear and provide some interesting phenomena (see for example \cite[Proposition 23]{[CM1]} and \cite[Proposition 12]{[KO4]}). The key idea of these investigations of multi-layer networks is to reduce the study of dynamics on complex networks to simpler networks. In both theory and practice, the adjacency matrix of the original multi-layer network may appear quite complicated. However, using the techniques that we develop here for join graphs, we will see that we can attach to each multi-layer network a reduced matrix which usually has a much smaller size than the original adjacency matrix. Nevertheless, as we show in \cite{nguyen2022broadcasting}, by considering this reduced matrix, we can obtain good information about the entire complex network.  In \cite{[CM1]}, we study the case where the connection within a community follows a simple rule, namely, each community is a circulant network. In this case, the main theorem in \cite{[CM1]}, which generalizes the Circulant Diagonalization Theorem (CDT), explicitly describes the spectrum of the joined network. In this article, we generalize this theorem to the case where each community forms a regular graph. This relaxation will allow us to investigate a broader class of networks. In particular, we are able to apply our generalized theorems to study several interesting problems in spectral graph theory.

We remark that since the completion of this article, we have utilized this circle of ideas to study some broadcasting and combining mechanisms on multi-layer networks of oscillators (see \cite{jain2023composed, nguyen2022broadcasting}.) We want to emphasize that the broadcasted solution described \cref{eq:broadcast} has a nice physical interpretation(see \cite{nguyen2022broadcasting}). Furthermore, we also extend this line of research further by investigating the question of whether a given graph can be written as a joined union of smaller graphs with a special focus on the case where the graph is highly symmetric (see \cite{chudnovsky2024prime, nguyen2024certain}).

\subsection{Outline}
The structure of this article is as follows. In \cref{sec:general}, we study some basic spectral properties of normal matrices with constant row sums. In \cref{sec:general_join}, we define the joins of these matrices and study their spectral properties. We then apply the main results from this section to give new proofs of several results in \cite{[CRS]} for the join of regular graphs. In this way, we provide a new conceptual insight for these statements based on key results in \cref{sec:general_join}. \cref{section:ramanujan_construction} explains a simple method to construct Ramanujan graphs using the join construction. We remark here that the construction of Ramanujan graphs is of great interest in network, communication, coding, and number theories.  We then discuss the joined union of graphs in \cref{sec:joined_union_general}. In \cref{sec:graph_energy}, we apply the results from the previous sections to study some questions in graph energy. In particular, we propose a question on the relation between the energy of several regular graphs and their joined union. Some notable results in this section are \cref{prop:first_estimate} and \cref{thm:second_main} where we provide some concrete evidence for our question. 

\begin{rem} 
We remark that a weaker form of \cref{thm:join_spectrum} has been discussed previously in \cite{[joined_union2], [joined_union]}. We refer the reader to  \cref{rem:difference} and  \cref{rem:semimagic} 
regarding our approach. We explain in these remarks why our approach is more flexible than \cite{[joined_union2], [joined_union]} and why we can apply main results in the situations where \cite{[joined_union2], [joined_union]} cannot be applied directly.
\end{rem}

\section{Normal matrices with constant row sums} \label{sec:general}

We start with a definition. 
\begin{definition} 
Let $A=(a_{ij})_{i,j}$ be an $n \times n$ matrix with complex coefficients. We say that $A$ is $r_A$-row regular if the sum of all entries in each row of $A$ is equal to $r_A$, namely
\[ \forall 1 \leq i \leq n,\;\sum_{j=1}^n a_{ij}=r_A.  \]
Similarly, we say that $A$ is $c_A$-column regular if the sum of all entries in each column of $A$ is equal to $c_A$. 
\end{definition} 

\begin{rem} \label{rem:magic_square}
Some authors use the term ``semimagic squares" for matrices that are both $r_A$-regular and $c_A$-regular and $r_A=c_A$ (see, for example \cite{[semimagic]}.) 

\end{rem}

We note that if $A$ is both $r_A$-row regular and $c_A$-column regular then $r_A=c_A$ as long as we work with matrices with coefficients in $\R$ or $\C$ (or more generally, over a field of characteristics $0$).  This can be seen by observing that the sum of all entries in $A$ is equal to both $nr_A$ and $nc_A$; and therefore $r_A=c_A$. Here is a simple criterion for row and column regularity.% to test whether $A$ is $r_A$-row regular (respectively $c_A$-column regular). 

\begin{lem} \label{lem:criterior}
Let $v=\one_n=(1, 1, \ldots, 1)^t \in \C^n$. Then $A$ is $r_A$-row regular if and only if $v$ is an eigenvector of $A$ associated with the eigenvalue $r_A$. % where
%\[ r_A=\sum_{j=1}^n a_{1j} .\] 
Similarly, $A$ is $c_A$-column regular if and only if $v^{t}$ is a left eigenvector of $A$ associated with the eigenvalue $c_A.$
\end{lem}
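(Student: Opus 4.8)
The plan is to compute the matrix–vector product $A\one_n$ directly and simply read off the equivalence. With $v=\one_n$, the $i$-th coordinate of $Av$ is
$(Av)_i=\sum_{j=1}^n a_{ij}v_j=\sum_{j=1}^n a_{ij}$, since every $v_j=1$. Hence the vector identity $Av=r_A v$ holds if and only if $\sum_{j=1}^n a_{ij}=r_A$ for every $i$, which is exactly the definition of $A$ being $r_A$-row regular. Both directions of the first claim are contained in this single observation: row regularity forces $Av=r_A v$, and since $v\neq 0$ this exhibits $v$ as an eigenvector with eigenvalue $r_A$; conversely, $Av=r_A v$ unpacks coordinatewise to the row-sum condition. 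Note in particular that the eigenvalue is pinned down to be $r_A$ (not some other scalar), again because the computation is coordinatewise.

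The second statement is the transpose of the first. The $j$-th coordinate of the row vector $v^t A$ is $(v^t A)_j=\sum_{i=1}^n v_i a_{ij}=\sum_{i=1}^n a_{ij}$, i.e.\ the $j$-th column sum of $A$. Therefore $v^t A=c_A v^t$ if and only if every column sum of $A$ equals $c_A$, which is the definition of $c_A$-column regularity. Alternatively one may simply apply the first part to $A^t$, using that $A^t$ is $c_A$-row regular precisely when $A$ is $c_A$-column regular, and that $v$ being a (right) eigenvector of $A^t$ is the same as $v^t$ being a left eigenvector of $A$.

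I do not expect any genuine obstacle here; the lemma is a direct unwinding of definitions. The only points that warrant a word of care are that $\one_n$ is a nonzero vector, so that the term ``eigenvector'' is legitimately applied, and that the associated eigenvalue is forced to equal $r_A$ (respectively $c_A$), both of which are immediate from the coordinatewise computation above. Accordingly I would present each direction in a line or two and conclude.
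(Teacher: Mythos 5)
Your proof is correct and is simply the explicit coordinatewise unwinding of the definitions that the paper dispenses with as ``obvious''; there is nothing to add or fix.
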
 
\begin{proof}
Obvious from the definition. 
\end{proof}

\begin{definition}
Let $A \in M_{n}(\C)$ be a matrix of size $n \times n$. We say that $A$
is normal if $AA^{*}=A^{*} A.$ Here $A^{*}$ is the conjugate transpose of $A$. 
\end{definition}

A special property of normal matrices is that they are always diagonalizable by an orthonormal basis of eigenvectors. 

\begin{thm} (see \cite[Theorem 2.5.3]{[HJ]})
Suppose $A$ is a normal matrix. Then its eigenspaces span $\C^n$ and are pairwise orthogonal with respect to the standard inner product on $\C^n.$
\end{thm}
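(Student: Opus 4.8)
The plan is to deduce the statement --- which is the spectral theorem for normal matrices --- by induction on $n$, with the crucial input being that for a normal $A$ the operators $A-\lambda I$ and $A^{*}-\overline{\lambda}I$ have the same kernel. First I would record two elementary observations: if $A$ is normal then so is $A-\lambda I$ for every $\lambda\in\C$, and every normal $B$ satisfies $\|Bx\|=\|B^{*}x\|$ for all $x$, since $\|Bx\|^{2}=\langle B^{*}Bx,x\rangle=\langle BB^{*}x,x\rangle=\|B^{*}x\|^{2}$. Applying this with $B=A-\lambda I$ gives $(A-\lambda I)x=0\iff(A^{*}-\overline{\lambda}I)x=0$; that is, every eigenvector of $A$ for the eigenvalue $\lambda$ is an eigenvector of $A^{*}$ for $\overline{\lambda}$. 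This already settles the orthogonality half of the theorem: if $Ax=\lambda x$ and $Ay=\mu y$ with $\lambda\neq\mu$, then $\lambda\langle x,y\rangle=\langle Ax,y\rangle=\langle x,A^{*}y\rangle=\langle x,\overline{\mu}y\rangle=\mu\langle x,y\rangle$, which forces $\langle x,y\rangle=0$.

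For the spanning half I would induct on $n$, the case $n=1$ being vacuous. For $n>1$, the fundamental theorem of algebra supplies an eigenvalue $\lambda$ of $A$ with a unit eigenvector $v$; put $W=v^{\perp}$. By the key observation $A^{*}v=\overline{\lambda}v$, and hence for every $w\in W$ one has $\langle Aw,v\rangle=\langle w,A^{*}v\rangle=0$ and $\langle A^{*}w,v\rangle=\langle w,Av\rangle=0$, so $W$ is invariant under both $A$ and $A^{*}$. Therefore the restriction $A|_{W}$ is an operator on the $(n-1)$-dimensional inner product space $W$ whose adjoint is $A^{*}|_{W}$, so $A|_{W}$ is again normal. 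By the inductive hypothesis $W$ has an orthonormal basis of eigenvectors of $A|_{W}$, and these are eigenvectors of $A$; adjoining $v$ yields an orthonormal basis of $\C^{n}$ consisting of eigenvectors of $A$. Grouping these basis vectors by their eigenvalue exhibits each eigenspace as the span of an orthonormal family and shows that the eigenspaces together span $\C^{n}$, while the pairwise orthogonality of distinct eigenspaces was obtained in the previous paragraph.

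The step that genuinely uses normality --- and where an arbitrary matrix would fail --- is the invariance of $W=v^{\perp}$ under $A$ itself, which rests entirely on the identity $A^{*}v=\overline{\lambda}v$; a related subtlety worth stating explicitly is that $W$ is also $A^{*}$-invariant, since this is what guarantees $(A|_{W})^{*}=A^{*}|_{W}$ and hence the normality of $A|_{W}$ needed to run the induction. An alternative route avoids the induction: invoke Schur's unitary triangularization $U^{*}AU=T$ with $T$ upper triangular; normality is preserved under unitary conjugation, so $T$ is normal, and comparing the diagonal entries of $TT^{*}$ with those of $T^{*}T$, starting from the top-left corner and moving down, forces every off-diagonal entry of $T$ to vanish, so $T$ is diagonal and the columns of $U$ form the desired orthonormal eigenbasis. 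In that approach the one computational point to be careful with is precisely this top-down comparison of diagonal entries.
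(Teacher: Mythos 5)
Your proof is correct. The paper does not actually prove this statement --- it is quoted as a known result with a citation to \cite[Theorem 2.5.3]{[HJ]} --- so there is no in-paper argument to compare against; what you have written is the standard spectral theorem proof, and both of your routes (the inductive deflation via the $A$- and $A^{*}$-invariance of $v^{\perp}$, resting on the key identity $\ker(A-\lambda I)=\ker(A^{*}-\overline{\lambda}I)$, and the Schur triangularization followed by the top-down diagonal comparison of $TT^{*}$ and $T^{*}T$) are complete and sound. You also correctly isolate the two places where normality is genuinely used, which is exactly the right thing to flag.
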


A direct corollary of theorem is the following. 
\begin{cor} \label{cor:orthogonal}
Suppose that $A$ is both normal and $r_A$-row regular. Then there exists an orthonormal basis $\{v_1^{A}, v_2^{A}, \ldots, v_n^{A} \} $ of eigenvectors of $A$ associated with the eigenvalues $\{\lambda_{1}^A, \lambda_2^A, \ldots, \lambda_n^A \}$ such that 
\(v_1^{A}= \frac{1}{\sqrt{n}}\one_n=\frac{1}{\sqrt{n}}(1, \ldots, 1)^t \in \C^n\). 
In particular, $r_{A}=\lambda_{1}^A$ and, for $2 \leq k \leq n$, the standard inner product
\(\langle v_{1}^A, v_k^{A} \rangle =0 .\) 
\end{cor}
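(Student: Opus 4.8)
The plan is to combine Lemma \ref{lem:criterior} with the spectral theorem for normal matrices stated just above. First, since $A$ is $r_A$-row regular, Lemma \ref{lem:criterior} tells us that $\one_n$ is an eigenvector of $A$ with eigenvalue $r_A$; after normalizing, the unit vector $v_1^A := \frac{1}{\sqrt n}\one_n$ lies in the eigenspace $E_{r_A} := \ker(A - r_A I)$. In particular this already shows $r_A = \lambda_1^A$ is genuinely an eigenvalue of $A$, which is the first of the two ``in particular'' assertions.

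Next I would invoke normality: because $A$ is normal, $\C^n$ decomposes as an orthogonal direct sum $\C^n = \bigoplus_{\mu} E_\mu$ of the distinct eigenspaces of $A$, and $\sum_\mu \dim E_\mu = n$. I would then build the desired orthonormal basis eigenspace by eigenspace. Inside $E_{r_A}$, start from any basis whose first vector is $v_1^A$ and apply Gram--Schmidt; since $v_1^A$ is already a unit vector, this produces an orthonormal basis $v_1^A, v_2^A, \ldots, v_d^A$ of $E_{r_A}$ (with $d = \dim E_{r_A}$) having the prescribed first vector. For every other eigenvalue $\mu \ne r_A$, pick an arbitrary orthonormal basis of $E_\mu$. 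Concatenating these lists over all eigenvalues of $A$ yields $n$ vectors $v_1^A, \ldots, v_n^A$, each an eigenvector of $A$.

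It remains to verify that this list is an orthonormal basis of $\C^n$ and that $\langle v_1^A, v_k^A \rangle = 0$ for $2 \le k \le n$. Orthonormality of vectors coming from the same eigenspace is built into the Gram--Schmidt step; orthonormality of vectors from distinct eigenspaces is precisely the orthogonality of eigenspaces supplied by the spectral theorem; and the count $\sum_\mu \dim E_\mu = n$ shows the list has the right length and hence spans $\C^n$. Finally, for $k \ge 2$ each $v_k^A$ either lies in $E_{r_A}$, in which case it is orthogonal to $v_1^A$ by the Gram--Schmidt construction, or lies in some $E_\mu$ with $\mu \ne r_A$, in which case it is orthogonal to the whole of $E_{r_A}$ and in particular to $v_1^A$. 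Either way $\langle v_1^A, v_k^A \rangle = 0$.

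I do not anticipate any serious obstacle: the one point deserving a little care is that $r_A$ may be a repeated eigenvalue, so one cannot merely ``choose eigenvectors'' but must run Gram--Schmidt inside $E_{r_A}$ in order to force $v_1^A = \frac{1}{\sqrt n}\one_n$ into the basis while preserving orthonormality. Everything else is a direct transcription of the spectral theorem for normal matrices.
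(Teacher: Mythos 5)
Your argument is correct and is exactly the route the paper intends: the paper presents this as an immediate consequence of the spectral theorem for normal matrices combined with Lemma \ref{lem:criterior}, and your Gram--Schmidt step inside $E_{r_A}$ merely makes explicit the detail the paper leaves unsaid. No issues.
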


Another corollary is the following. 
\begin{cor}
If $A$ is both normal and $r_A$-row regular. Then $A$ is also $r_A$-column regular. In particular, $A$ is a semimagic square matrix. 
\end{cor}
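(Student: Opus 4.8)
The plan is to use the orthonormal eigenbasis provided by Corollary \ref{cor:orthogonal} together with Lemma \ref{lem:criterior}. Since $A$ is normal and $r_A$-row regular, Lemma \ref{lem:criterior} tells us that $\one_n$ is a (right) eigenvector of $A$ with eigenvalue $r_A$; so to prove $A$ is $r_A$-column regular it suffices, again by Lemma \ref{lem:criterior}, to show that $\one_n^t$ is a left eigenvector of $A$ with the same eigenvalue, i.e. that $\one_n^t A = r_A \one_n^t$, equivalently $A^t \one_n = r_A \one_n$.

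The key observation is that for a normal matrix, a right eigenvector for eigenvalue $\lambda$ is automatically a right eigenvector of $A^*$ for eigenvalue $\overline{\lambda}$. This is a standard fact (it follows from the spectral theorem quoted above, or from the identity $\|(A-\lambda I)x\| = \|(A^*-\overline{\lambda}I)x\|$ valid for normal $A$). Applying this with $x = \one_n$ and $\lambda = r_A$, we get $A^* \one_n = \overline{r_A}\,\one_n$. Taking conjugate transpose of both sides of $A^*\one_n = \overline{r_A}\one_n$, and using that $\one_n$ has real entries, yields $\one_n^t A = r_A \one_n^t$, which is exactly the statement that $A$ is $r_A$-column regular by Lemma \ref{lem:criterior}. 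The final sentence of the corollary then follows from Remark \ref{rem:magic_square}: $A$ is both $r_A$-row regular and $r_A$-column regular with equal row and column sums, hence a semimagic square.

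Alternatively, and perhaps more in the spirit of the preceding Corollary \ref{cor:orthogonal}, one can argue via the orthonormal eigenbasis directly. Write $A = \sum_{k=1}^n \lambda_k^A v_k^A (v_k^A)^*$ in its spectral decomposition with $v_1^A = \tfrac{1}{\sqrt n}\one_n$. Then $A^* = \sum_k \overline{\lambda_k^A} v_k^A (v_k^A)^*$, so $A^* \one_n = \sqrt n\sum_k \overline{\lambda_k^A} v_k^A \langle v_k^A, v_1^A\rangle = \sqrt n\,\overline{\lambda_1^A} v_1^A = \overline{r_A}\,\one_n$ by orthonormality, and one concludes as before. I expect no real obstacle here: the only subtlety worth stating carefully is that $A$ normal implies $Ax = \lambda x \Rightarrow A^*x = \overline\lambda x$ (it is crucial that this is two-sided — it would fail for a general matrix), and the fact that $\one_n$ is real makes the passage between left and right eigenvectors clean. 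The identification $r_A = c_A$ was already noted in the text before Lemma \ref{lem:criterior}, so once both regularities are established the semimagic conclusion is immediate.
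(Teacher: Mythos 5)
Your proof is correct and follows essentially the same route as the paper: the paper forms the unitary matrix $V$ of orthonormal eigenvectors and rewrites $AV=VD$ as $V^*A=DV^*$ to conclude that $\one_n^t$ is a left eigenvector with eigenvalue $r_A$, which is exactly the content of your spectral-decomposition computation and of the standard fact $Ax=\lambda x\Rightarrow A^*x=\overline{\lambda}x$ for normal $A$. The care you take with conjugation (since $r_A$ need not be real) is appropriate and consistent with the paper's argument.
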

\begin{proof}
Let $\{v_1^{A}, v_2^{A}, \ldots, v_n^{A} \} $ be the system of orthonormal eigenvectors of $A$ associated with the eigenvalues $\{r_A, \lambda_2^A, \ldots, \lambda_n^A \}$ as described in Corollary \ref{cor:orthogonal}. Let $V=(v_1^{A}, v_2^{A}, \ldots, v_n^A)$ be the $n \times n$ matrix formed by this system of eigenvectors and let $D=\text{diag}(r_{A}, \ldots, \lambda_n^A)$ be the digonal matrix of corresponding eigenvalues. We then have  $A V = VD$.   Since $\{v_1^A, v_2^A, \ldots, v_n^A \}$ is an orthonormal basis, we have $VV^{*}=V^{*}V=I_n$, and hence $V^{*}=V^{-1}$.  Therefore, we can rewrite the equation $AV=VD$ as 
\[ (V^{*}) A = D V^{*} .\] 
This shows that the rows of $V^{*}$, namely $\{(v_1^{A})^*, (v_2^A)^{*}, \ldots, (v_n^A)^{*} \}$ form a system of orthnormal \textit{left} eigenvectors for $A$ associated with the eigenvalues  $\{r_A, \lambda_2^A, \ldots, \lambda_n^A \}$. We conclude that the column sums of $A$ are equal to $\lambda_1^{A}=r_A$ as well. 

\end{proof}

\section{Joins of normal matrices with constant row sums} \label{sec:general_join}

Let $d, k_1, k_2, \ldots, k_d \in \N\setminus\{0\}$, and set $n=k_1+k_2+\ldots+k_d$.
Thus $\mathbf{k}_d=(k_1,\dots,k_d)$ is a partition of $n$ into $d$ non-zero summands. Following \cite{[CM1]}, we shall consider $n \times n$ matrices of the following form
\begin{equation}\label{eq:def of join}\tag{$\ast$}
A=\left(\begin{array}{c|c|c|c}
A_1 & a_{12}\ind & \cdots & a_{1d}\ind \\
\hline
a_{21}\ind & A_2 & \cdots & a_{2d}\ind \\
\hline
\vdots & \vdots & \ddots & \vdots \\
\hline
a_{d1}\ind & a_{d2}\ind & \cdots & A_d
\end{array}\right),
\end{equation}
where, for each $1 \leq i,j \leq d$, $A_i$ is a normal, $r_{A_i}$-row regular matrix of size $k_i \times k_i$ with complex entries, and $a_{i,j}\ind$ is a $k_i \times k_j$ matrix with all entries equal to a constant $a_{i,j}\in\mathbb{C}$.
These matrices will be called \textit{$\mathbf{k}_d$-joins of normal row regular} (\textit{NRR} for short) \textit{matrices}.

For each $1 \leq i \leq d$, let $\{v_1^{A_i}, v_2^{A_i}, \ldots, v_{k_i}^{A_i} \}$ and $\{\lambda_1^{A_i}, \lambda_2^{A_i}, \ldots, \lambda_{k_i}^{A_i} \}$ be the set of eigenvectors and eigenvalues of $A_i$ as described in Corollary \ref{cor:orthogonal}. The next proposition is a direct generalization of \cite[Proposition 10]{[CM1]}. Before stating it, let us introduce the convenient notation
\[
(x_1,\dots,x_m)^T \conc (y_1,\dots,y_n)^T = (x_1,\dots,x_m, y_1,\dots,y_n)^T.
\]
For more vectors, we can define $\conc$ inductively.

\begin{prop}\label{prop:circulant eigenvectors}
For each $1 \leq i \leq d$ and  $2 \leq j \leq k_i$ let 
\[ \begin{aligned}
w_{i,j}&=\vec{0}_{k_1} \conc \ldots \conc\vec{0}_{k_{i-1}}\conc v_{j}^{A_i} \conc \vec{0}_{k_{i+1}} \conc \ldots \conc \vec{0}_{k_d} .\\
\end{aligned}
\] 
Then $w_{i,j}$ is an eigenvector of $A$ associated with the eigenvalue $\lambda_{j}^{A_i}.$
\end{prop}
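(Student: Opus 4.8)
The plan is to verify directly that $A w_{i,j} = \lambda_j^{A_i} w_{i,j}$ by computing the block structure of the matrix-vector product. Write $w_{i,j}$ as a column vector partitioned into $d$ blocks of sizes $k_1,\dots,k_d$, where the $\ell$-th block is $\vec{0}_{k_\ell}$ for $\ell \neq i$ and $v_j^{A_i}$ for $\ell = i$. Applying $A$ in block form, the $\ell$-th block of $A w_{i,j}$ is the sum over $m$ of (the $(\ell,m)$ block of $A$) times (the $m$-th block of $w_{i,j}$); since only the $m=i$ term contributes, this reduces to (the $(\ell,i)$ block of $A$) applied to $v_j^{A_i}$.

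Now I would split into two cases. First, for $\ell = i$, the relevant block of $A$ is $A_i$ itself, so this block of $A w_{i,j}$ is $A_i v_j^{A_i} = \lambda_j^{A_i} v_j^{A_i}$, which is exactly $\lambda_j^{A_i}$ times the $i$-th block of $w_{i,j}$, as desired. Second, for $\ell \neq i$, the relevant block of $A$ is $a_{\ell i}\,\mathbf{1}_{k_\ell \times k_i}$ (the all-constant matrix with value $a_{\ell i}$), so this block of $A w_{i,j}$ equals $a_{\ell i}$ times the vector $\mathbf{1}_{k_\ell \times k_i} v_j^{A_i}$, each of whose entries is the sum of the coordinates of $v_j^{A_i}$, i.e. $\langle \mathbf{1}_{k_i}, v_j^{A_i}\rangle$ (up to conjugation, which is irrelevant here since $\mathbf{1}$ is real). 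The key point is that this inner product vanishes: by Corollary~\ref{cor:orthogonal}, for $2 \leq j \leq k_i$ the eigenvector $v_j^{A_i}$ is orthogonal to $v_1^{A_i} = \frac{1}{\sqrt{k_i}}\mathbf{1}_{k_i}$, hence $\langle \mathbf{1}_{k_i}, v_j^{A_i}\rangle = 0$. Therefore the $\ell$-th block of $A w_{i,j}$ is $\vec{0}_{k_\ell}$, which again matches $\lambda_j^{A_i}$ times the $\ell$-th block of $w_{i,j}$ (both being zero).

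Combining the two cases, $A w_{i,j} = \lambda_j^{A_i} w_{i,j}$ block by block, which completes the proof. I do not expect any real obstacle here: the only substantive ingredient is the orthogonality relation $\langle v_1^{A_i}, v_j^{A_i}\rangle = 0$ for $j \geq 2$ supplied by Corollary~\ref{cor:orthogonal}, and the rest is bookkeeping with block matrices. The one point worth stating carefully is why the restriction $j \geq 2$ is needed — precisely because $w_{i,1}$ (the analogous vector built from $v_1^{A_i}$) would \emph{not} be an eigenvector of $A$, since the off-diagonal blocks would then produce nonzero contributions; handling those "top" eigenvectors is presumably the subject of a subsequent proposition.
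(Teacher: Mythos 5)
Your proof is correct and is exactly the paper's argument written out in full: the paper's proof is a one-line ``by direct inspection,'' citing precisely the key fact you isolate, namely that $\langle a_{\ell,i}\one_{k_i}, v_j^{A_i}\rangle = 0$ for $\ell \neq i$ and $j \geq 2$ by Corollary~\ref{cor:orthogonal}. Your closing remark about why $j \geq 2$ is essential (and why the ``top'' eigenvectors $v_1^{A_i}$ must be handled separately, as they are in Theorem~\ref{thm:join_spectrum}) is also accurate.
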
 
\begin{proof}
By direct inspection, the key property being that, for $1\leq\ell\leq d,\;\ell\neq i$ and $2\leq j\leq k_i$, $\langle a_{\ell,i}\one_{k_i},v^{A_i}_j\rangle=0$, according to Corollary \ref{cor:orthogonal}.

\end{proof}
We will refer to the $w_{i,j}$'s and to the associated eigenvalues $\lambda_{j}^{A_i}$ as the old NRR eigenvectors and eigenvalues of $A$. Let $\lambda_1, \lambda_2, \ldots \lambda_d$ be the (not necessarily distinct) remaining eigenvalues of $A$.
\begin{definition}
The reduced characteristic polynomial of $A$ is 
\[ \overline{p}_{A}(t)=\prod_{i=1}^d (t-\lambda_i)=\dfrac{p_{A}(t)}{\prod_{\substack{1 \leq i \leq d, \\ 2 \leq  j \leq k_i}} (t-\lambda_{j}^{A_i})} =\frac{p_A(t)}{\prod_{i=1}^d \frac{p_{A_i}(t)}{t-r_{A_i}}} .\] 
\end{definition} 
We will now describe $\overline{p}_{A}(t)$ as the characteristic polynomial of the matrix
\[ \overline A= 
\begin{pmatrix}
r_{A_1} & a_{12}k_2 & \cdots & a_{1d}k_d \\
a_{21}k_1 & r_{A_2} & \cdots & a_{2d}k_d \\
\vdots  & \vdots  & \ddots & \vdots  \\
a_{d1}k_1 & a_{d2}k_2 & \cdots & r_{A_d} 
\end{pmatrix}.\] 

For a vector $w=(x_1,\dots,x_d)\in\C^d$, we define 
\begin{equation} \label{eq:broadcast}
w^\otimes=(\underbrace{x_1, \ldots, x_1}_{\text{$k_1$ terms}},\dots,\underbrace{x_d, \ldots, x_d}_{\text{$k_d$ terms}})^{t}\in \C^n
\end{equation}

\begin{thm} \label{thm:join_spectrum}
The reduced characteristic polynomial of $A$ coincides with the characteristic polynomial of $\overline A$, namely
\[ \overline{p}_{A}(t)=p_{\overline A}(t) .\] 
In other words 
\[ p_{A}(t)= p_{\overline{A}}(t) {\prod_{\substack{1 \leq i \leq d, \\ 2 \leq  j \leq k_i}} (t-\lambda_{j}^{A_i})} .\] 
\end{thm}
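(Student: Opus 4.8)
The plan is to count eigenvalues with multiplicity and exhibit enough linearly independent eigenvectors of $A$ built from eigenvectors of $\overline A$. By Proposition \ref{prop:circulant eigenvectors} we already have the $n-d$ old NRR eigenvectors $w_{i,j}$, and these are linearly independent since, for fixed $i$, the $v^{A_i}_j$ ($2\le j\le k_i$) are orthonormal, and vectors supported on different blocks are trivially independent. So it suffices to produce $d$ further eigenvectors of $A$, linearly independent from each other and from all the $w_{i,j}$, whose eigenvalues are exactly the roots of $p_{\overline A}(t)$; then $p_A(t)$ has $p_{\overline A}(t)\cdot\prod_{i,j}(t-\lambda^{A_i}_j)$ as a factor of degree $n$, hence equals it.

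The key computation is the following: for $w=(x_1,\dots,x_d)^t\in\C^d$, I claim $A\,w^\otimes=(\overline A w)^\otimes$. Indeed, restrict $A w^\otimes$ to the $i$-th block of coordinates: the diagonal block contributes $A_i(x_i\one_{k_i})=x_i r_{A_i}\one_{k_i}$ because $A_i$ is $r_{A_i}$-row regular (Lemma \ref{lem:criterior}), and the off-diagonal block $(i,\ell)$ contributes $x_\ell\, a_{i\ell}\ind\,\one_{k_\ell}=x_\ell a_{i\ell} k_\ell \one_{k_i}$. Summing, the $i$-th block of $A w^\otimes$ is $\bigl(r_{A_i}x_i+\sum_{\ell\ne i}a_{i\ell}k_\ell x_\ell\bigr)\one_{k_i}$, and the scalar here is precisely the $i$-th entry of $\overline A w$. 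Hence $w\mapsto w^\otimes$ intertwines $\overline A$ with $A$: if $\overline A w=\mu w$ then $A w^\otimes=\mu w^\otimes$, and $w^\otimes\ne 0$ whenever $w\ne 0$.

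It remains to handle multiplicities and linear independence cleanly. Since $\overline A$ is a $d\times d$ matrix, its characteristic polynomial $p_{\overline A}(t)$ has $d$ roots $\lambda_1,\dots,\lambda_d$ counted with multiplicity. Rather than chase Jordan blocks, I would argue at the level of polynomials: the map $\iota\colon w\mapsto w^\otimes$ is an injective linear map $\C^d\hookrightarrow\C^n$ with $A\circ\iota=\iota\circ\overline A$, and its image $W=\iota(\C^d)$ is an $A$-invariant subspace on which $A$ acts (in the basis $\iota(e_1),\dots,\iota(e_d)$) by the matrix $\overline A$; therefore the characteristic polynomial of $A|_W$ is exactly $p_{\overline A}(t)$. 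On the other hand, the span $W'$ of all the $w_{i,j}$ is an $A$-invariant subspace of dimension $n-d$, and the characteristic polynomial of $A|_{W'}$ is $\prod_{i,j}(t-\lambda^{A_i}_j)$. Finally $W\cap W'=0$: any vector in $W'$ is, block by block, orthogonal to $\one_{k_i}$ (again by Corollary \ref{cor:orthogonal}), whereas a nonzero vector in $W$ is a nonzero multiple of $\one_{k_i}$ on at least one block. Hence $\C^n=W\oplus W'$ as $A$-modules and $p_A(t)=p_{\overline A}(t)\cdot\prod_{i,j}(t-\lambda^{A_i}_j)$, which is the claim; dividing by $\prod_{i,j}(t-\lambda^{A_i}_j)$ and recalling the definition of $\overline p_A$ finishes it.

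The main obstacle is purely bookkeeping: making sure the block-orthogonality argument for $W\cap W'=0$ is airtight (it uses that $v_1^{A_i}=\tfrac1{\sqrt{k_i}}\one_{k_i}$ is orthogonal to each $v_j^{A_i}$, $j\ge 2$) and phrasing the "characteristic polynomial of $A|_W$ equals $p_{\overline A}$" step without implicitly assuming $\overline A$ is diagonalizable — which is why I prefer the invariant-subspace/direct-sum formulation over an eigenvector-by-eigenvector count. No genuinely hard analysis is involved; the one thing to be careful about is that $\overline A$ need not be normal or even diagonalizable, so the argument must go through characteristic polynomials of restrictions, not through explicit diagonalization.
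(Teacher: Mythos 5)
Your proof is correct, and it rests on the same key identity as the paper's: the map $\iota\colon w\mapsto w^\otimes$ intertwines $\overline A$ with $A$ (the paper states this as $[(\overline A-\lambda I)v]^\otimes=(A-\lambda I)v^\otimes$, which is your computation with $\lambda$ subtracted). Where you diverge is in how the multiplicities are accounted for. The paper takes a generalized eigenvector $w$ of $\overline A$ with $(\overline A-\lambda I_d)^m w=0$ and shows by induction on $m$ that $(A-\lambda I_n)^m w^\otimes=0$, then concludes by a linear-independence observation and ``dimension counting.'' You instead observe that $W=\iota(\C^d)$ and $W'=\mathrm{span}\{w_{i,j}\}$ are both $A$-invariant, that $W\cap W'=0$ by block orthogonality to $\one_{k_i}$, hence $\C^n=W\oplus W'$ and $p_A=p_{A|_W}\,p_{A|_{W'}}=p_{\overline A}\cdot\prod_{i,j}(t-\lambda_j^{A_i})$, since $A|_W$ is represented by $\overline A$ in the basis $\iota(e_1),\dots,\iota(e_d)$. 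This is a cleaner packaging of the same idea: it makes the multiplicity bookkeeping completely explicit (the paper's ``dimension counting'' step is stated rather tersely), it avoids any induction over generalized eigenvectors, and like the paper's argument it requires no diagonalizability of $\overline A$. The only thing to double-check in your write-up is the claim $W\cap W'=0$, which you handle correctly: a vector in the intersection has $i$-th block equal to $x_i\one_{k_i}$ and simultaneously orthogonal to $\one_{k_i}$, forcing $x_i=0$ for every $i$.
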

\begin{proof}
Firstly, we note that, by construction, for any $v\in\C^d$ and any $\lambda\in\C$
\begin{equation}\label{eq:tensor expansion}
\left[(\overline A-\lambda I)v\right]^\otimes = (A-\lambda I)v^\otimes.
\end{equation}
Let ${\lambda}$ be an eigenvalue of $\overline{A}$, and let ${w}=(x_1,\dots,x_d)$ be an associated generalized eigenvector, satisfying $(\overline{A}-{\lambda}I_d)^m{w}=0$ for a suitable $m$. 
We will show, by induction on $m$, that $(A-\lambda I_n)^mw^\otimes=0$.
If $m=1$, the assertion is a consequence of Equation \eqref{eq:tensor expansion}. If $m>1$, consider the vector $w'=(\overline{A}-\lambda I_d)w$, which satisfies $(\overline{A}-{\lambda}I_d)^{m-1}{w'}=0$. By induction hypothesis, $({A}-{\lambda}I_n)^{m-1}{(w')}^\otimes=0$, therefore, thanks to Equation \eqref{eq:tensor expansion},
\[
(A-\lambda I_n)^mw^\otimes=(A-\lambda I_n)^{m-1}\left((A-\lambda I_n)w^\otimes\right)=(A-\lambda I)^{m-1}(w')^\otimes=0.
\]
In other words, the generalized eigenspaces of $\overline{A}$ lift to (direct summands of) generalized eigenspaces of $A$.
Now we observe that the NRR eigenvectors of $A$, together with the generalized eigenvectors of $A$ of the shape $w^\otimes$, $w\in\C^d$, form a linearly independent set thanks to Corollary \ref{cor:orthogonal}. Hence, by dimension counting, the eigenvalues of $\overline{A}$ are precisely the eigenvalues $\lambda_1,\dots,\lambda_d$ of $A$, with the correct multiplicity. Equivalently, $\overline{p}_A(t)=p_{\overline{A}}(t)$.
\end{proof}
\begin{rem} \label{rem:difference}
After proving Theorem \ref{thm:join_spectrum}, we learned from ResearchGate that a special form of this theorem has been proved in \cite[Theorem 2.1]{[joined_union]} and \cite[Theorem 3]{[joined_union2]}. We would like to take this chance to clarify the similarities and differences between our approaches. First, both our methods investigate the ``broadcasting'' mechanism to lift eigenvalues and eigenvectors from $\bar{A}$ to $A$ as described by \cref{eq:broadcast} (this broadcasting procedure has a physical interpretation as we explained in our work \cite{nguyen2022broadcasting}.) If $A$ is symmetric, then $A$ is diagonalizable and hence $\bar{A}$ is diagonalizable as well. In this case, \cite[Theorem 2.1]{[joined_union]} only needs to deal with eigenvalues. Our method shows that the broadcasted solution described by \cref{eq:broadcast} even works at the level of \textit{generalized} eigenvalues. In other words, it even works for the cases where either $\overline{A}$ is not diagonalizable or $A$ is not a symmetric matrix. This is important for applications because many networks in the Kuramoto models are directed.
\end{rem}

\begin{rem} \label{rem:semimagic}
We discuss a generalization of Theorem \ref{thm:join_spectrum}. More precisely, we can show that \cref{thm:join_spectrum} holds for any field $F$ under the mild assumption that $k_i$ is invertible in $F$ for all $1 \leq i \leq d$. In particular, we can drop the ``normal'' condition on $A_i$. First, we recall from Remark \ref{rem:magic_square} that a $k_1 \times k_1$ matrix $A_1$ with entries in a field $F$ is called a semimagic square if $A_1$ is both $r_{A_1}$-regular and $c_{A_1}$-regular and $c_{A_1}=r_{A_1}.$ If $k_1$ is invertible in $F$, then $F^{k_1}$ can be decomposed into 
\begin{equation} \label{eq:decomposition}
     F^{k_1}= F \one_{k_1} \oplus W_1 .
\end{equation}
     
Here $F\one_{k_1}$ is the one dimensional vector space generated by $\one_{k_1}$ and $W_1$ is the set of all vectors $(x_1, x_2, \ldots, x_{k_1}) \in F^{k_1}$ such that $ \sum_{i=1}^{k_1} x_i =0$. We can check that each component of this decomposition is stable under $A_1$ for any semimagic square $A_1$. Now suppose that $A$ is the join of $d$ semimagic squares $A_i$ of sizes $k_i \times k_i$ as defined in equation \ref{eq:def of join}. We assume that further that $k_i$ is invertible in the field $F$. Let $W_i$ be the decomposition 
\[ F^{k_i}=F \one_{k_i} \oplus W_i .\] 
We see that for $1 \leq i \leq d$
\[ \widehat{W_i}= \{\vec{0}_{k_1} \conc \ldots \conc\vec{0}_{k_{i-1}}\conc v_{i} \conc \vec{0}_{k_{i+1}} \conc \ldots \conc \vec{0}_{k_d} \quad | v_i \in W_i \}, \] 
is an $A$-stable subpsace of $F^{k_i}$. By the same proof as explained in Theorem \ref{thm:join_spectrum}, we can see that 
\begin{equation} \label{eq:generalization}
\overline{p}_{A}(t)=p_{\overline A}(t) .
\end{equation}
We also note that the set of all such $A$ with coefficients in any ring $R$ has the structure of a ring (the case $d=1$ was considered in \cite{[semimagic]}). By the same method described in the proof of \cite[Theorem 3.16]{[joint_group_ring]},  we could describe the structure of this ring and derive Equation \ref{eq:generalization} as a direct consequence. We could show, in particular, that the map $A \mapsto \bar{A}$ is a ring homomorphism. 
\end{rem}

\section{Applications to spectral graph theory}
\subsection{Spectrum of the join of regular graphs} \label{section:graph_join}
In this section, we apply Theorem \ref{thm:join_spectrum} to give new proofs for Theorem 2.1.8 and Theorem 2.1.9 in \cite{[CRS]}. Let $G_1, G_2, \ldots, G_d$ be undirected regular graphs such that $G_i$ has degree $r_i$ and $k_i$ vertices. Let $G$ be the join graph of $G_1, G_2, \ldots, G_d$, which we will denote by $G=G_1 + G_2 + \ldots +G_d$. We recall that $G$ is obtained from the disjoint union of $G_1, \ldots, G_2, \ldots, G_d$ by joining each vertex $G_i$ with each vertex in all others $G_j$ for $j \neq i$ (see \cite[Section 4]{[CM1]} and the reference therein for further details). Let $A_i$ be the adjacency matrix of $G_i$ for $1 \leq i \leq d$ and $A$ be the adjacency matrix of $G$. By definition of the join of graphs, the adjacency matrix $A$ of $G$ has the following form 
\[
A=\left(\begin{array}{c|c|c|c}
A_1 & \ind & \cdots & \ind \\
\hline
\ind & A_2 & \cdots & \ind \\
\hline
\vdots & \vdots & \ddots & \vdots \\
\hline
\ind & \ind & \cdots & A_d
\end{array}\right).
\]
Since $G_i$ is an undirected graph, $A_i$ is real and symmetric, hence normal. Furthermore, since $G_i$ is regular of degree $r_i$, $A_i$ is $r_i$-row regular. By Theorem \ref{thm:join_spectrum}, the reduced characteristics polynomial of $A$ is given by
\[ \overline{p}_{A}(t)=p_{\overline A}(t) ,\]
where 

\[ \overline A= 
\begin{pmatrix}
r_{1} & k_2 & \cdots & k_d \\
k_1 & r_{2} & \cdots & k_d \\
\vdots  & \vdots  & \ddots & \vdots  \\
k_1 & k_2 & \cdots & r_{d} 
\end{pmatrix} .\] 
In summary, we have
\begin{prop} \label{prop:join_spetrum}
The characteristic polynomial of $A$ is given by 
\[ p_{A}(t)=p_{\overline{A}}(t)  \dfrac{\prod_{i=1}^d p_{A_i}(t)}{\prod_{i=1}^d  (t-r_i)}.\] 
\end{prop}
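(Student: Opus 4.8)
The plan is to obtain this as an immediate specialization of Theorem \ref{thm:join_spectrum}. First I would observe that the adjacency matrix $A$ displayed above is precisely a $\mathbf{k}_d$-join of NRR matrices in the sense of \eqref{eq:def of join}, with all off-diagonal constants $a_{i,j}=1$ for $i\neq j$: indeed, since each $G_i$ is an undirected graph, its adjacency matrix $A_i$ is real and symmetric, hence normal; and since $G_i$ is $r_i$-regular, $A_i$ is $r_i$-row regular by Lemma \ref{lem:criterior}. Thus the hypotheses of Theorem \ref{thm:join_spectrum} are met, and the associated reduced matrix $\overline A$ is exactly the $d\times d$ matrix written just before the statement, obtained by substituting $a_{i,j}=1$ into the general $\overline A$ of Section 3.

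Next I would apply Theorem \ref{thm:join_spectrum} to get
\[
p_A(t)=p_{\overline A}(t)\prod_{\substack{1\leq i\leq d,\\ 2\leq j\leq k_i}}(t-\lambda_j^{A_i}),
\]
and then rewrite the old-NRR factor. By Corollary \ref{cor:orthogonal}, for each $i$ we have $r_i=\lambda_1^{A_i}$, so
\[
p_{A_i}(t)=\prod_{j=1}^{k_i}(t-\lambda_j^{A_i})=(t-r_i)\prod_{j=2}^{k_i}(t-\lambda_j^{A_i}),
\]
whence $\prod_{j=2}^{k_i}(t-\lambda_j^{A_i})=p_{A_i}(t)/(t-r_i)$. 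Multiplying these identities over $i=1,\dots,d$ and substituting into the previous display yields
\[
p_A(t)=p_{\overline A}(t)\,\frac{\prod_{i=1}^d p_{A_i}(t)}{\prod_{i=1}^d(t-r_i)},
\]
as claimed. (Equivalently, one may simply combine Theorem \ref{thm:join_spectrum} with the definition of $\overline{p}_A(t)$, which already records the right-hand factor.)

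Since everything is a direct consequence of Theorem \ref{thm:join_spectrum}, there is essentially no obstacle here; the only point requiring a moment's care is the bookkeeping that the old NRR eigenvalues of $A$ are exactly the eigenvalues of the $A_i$ other than the degree $r_i$, each counted with its multiplicity in $A_i$ — which is precisely what the definition of the reduced characteristic polynomial encodes, and what makes the quotient $\prod_i p_{A_i}(t)/\prod_i(t-r_i)$ a genuine polynomial.
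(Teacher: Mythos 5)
Your proposal is correct and follows exactly the route the paper takes: verify that the adjacency matrix of the join is a $\mathbf{k}_d$-join of NRR matrices with all $a_{i,j}=1$ (normality from symmetry, row regularity from $r_i$-regularity), apply Theorem \ref{thm:join_spectrum}, and rewrite the old-NRR factor as $\prod_i p_{A_i}(t)/(t-r_i)$ via the definition of the reduced characteristic polynomial. Nothing further is needed.
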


Let us consider some special cases of this proposition. 
\begin{cor}(See \cite[Theorem 2.1.8]{[CRS]})
If $G_1$ is $r_1$-regular with $k_1$ vertices and $G_2$ is $r_2$-regular with $k_2$ vertices then the characteristic polynomial of the join $G_1+G_2$ is given by 
\[ p_{G_1+G_2}(t)=\frac{p_{G_1}(t) p_{G_2}(t)}{(t-r_1)(t-r_2)} \left((t-r_1)(t-r_2)-k_1k_2 \right) .\] 
\end{cor}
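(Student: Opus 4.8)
The plan is to specialize Proposition \ref{prop:join_spetrum} to the case $d=2$. First I would write down the matrix $\overline{A}$ explicitly in this case, namely
\[
\overline{A}=\begin{pmatrix} r_1 & k_2 \\ k_1 & r_2 \end{pmatrix},
\]
and compute its characteristic polynomial directly: $p_{\overline{A}}(t)=\det(tI_2-\overline{A})=(t-r_1)(t-r_2)-k_1k_2$. This is the only genuine calculation, and it is a routine $2\times 2$ determinant.

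Next I would substitute this into the formula from Proposition \ref{prop:join_spetrum}, which for $d=2$ reads
\[
p_A(t)=p_{\overline{A}}(t)\,\frac{p_{A_1}(t)\,p_{A_2}(t)}{(t-r_1)(t-r_2)}.
\]
Since $A$ is precisely the adjacency matrix of $G_1+G_2$ (and $A_i$ the adjacency matrix of $G_i$), we have $p_A(t)=p_{G_1+G_2}(t)$ and $p_{A_i}(t)=p_{G_i}(t)$, so plugging in the expression for $p_{\overline{A}}(t)$ yields exactly the claimed identity
\[
p_{G_1+G_2}(t)=\frac{p_{G_1}(t)\,p_{G_2}(t)}{(t-r_1)(t-r_2)}\left((t-r_1)(t-r_2)-k_1k_2\right).
\]

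There is essentially no obstacle here: the corollary is a direct transcription of the $d=2$ instance of a result already proved, and the main theorem (Theorem \ref{thm:join_spectrum}) has done all the real work. The only points worth a sentence of care are (i) noting that the hypotheses of Proposition \ref{prop:join_spetrum} are met — each $G_i$ being an undirected $r_i$-regular graph guarantees $A_i$ is real symmetric (hence normal) and $r_i$-row regular — and (ii) observing that the factor $(t-r_i)$ in the denominator divides $p_{G_i}(t)$, since $r_i$ is the eigenvalue of $A_i$ associated to $\one_{k_i}$ by Corollary \ref{cor:orthogonal}, so the right-hand side is an honest polynomial. I expect the entire proof to be two or three lines.
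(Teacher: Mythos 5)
Your proposal is correct and matches the paper's own proof essentially verbatim: both write down $A$ and $\overline{A}$ for $d=2$, compute $p_{\overline{A}}(t)=(t-r_1)(t-r_2)-k_1k_2$, and substitute into the factorization from Proposition \ref{prop:join_spetrum}. Your added remarks on verifying the hypotheses and on $(t-r_i)$ dividing $p_{G_i}(t)$ are sensible but do not change the argument.
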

\begin{proof}
Let $A_1, A_2$ be the adjacency matrix of $G_1, G_2$ respectively. Then, the adjacency matrix of $G_1+G_2$ is 
\[ A= \begin{pmatrix}
A_1 & \ind \\ \ind & A_2 
\end{pmatrix} .\] 
We have 
\[ \overline{A}= \begin{pmatrix}
r_1 & k_2 \\ k_1 & r_2 
\end{pmatrix} .\]
Hence 
\[ p_{\overline{A}}(t)=(t-r_1)(t-r_2)-k_1k_2 .\] 
By Proposition \ref{prop:circulant eigenvectors}, we conclude that

\[ p_{G_1+G_2}(t)=\frac{p_{G_1}(t) p_{G_2}(t)}{(t-r_1)(t-r_2)} \left((t-r_1)(t-r_2)-k_1k_2 \right) .\] 

\end{proof}

\begin{cor}(See \cite[Theorem 2.1.9]{[CRS]} \label{cor:equal_difference}
Let $G_i$ be $r_i$-regular with $k_i$ vertices. Assume further that 
\[ k_1-r_1=k_2-r_2=\ldots=k_d-r_d=s .\] 
Let $G$ be the join graph of $G_1, G_2, \ldots, G_d$. Let \[ n=k_1+k_2+\ldots+k_d ,\] 
and 
\[ r=n-s .\]
Then 
\begin{enumerate}
    \item $G$ is $r$-regular with $n$ vertices. 
    \item The characteristic polynomial of $G$ is given by 
\[ p_{G}(t)=(x-r)(x+s)^{d-1} \dfrac{\prod_{i=1}^d p_{G_i}(t)}{\prod_{i=1}^d  (t-r_i)}. \]
\end{enumerate}
\begin{proof}
Let $v_i$ be a vertex in $G_i$. By definition, the degree of $v_i$ in $G$ is given by 
\[ \deg_{G_i}(v_i)+(n-k_i)= n-(k_i-r_i)=n-s=r.\] 
We conclude that $G$ is $r$-regular. This proves part $(1)$. For part $(2)$, we note that if $A$ is the adjacency matrix of $G$ then $\overline{A}$ is given by
\[ \overline A= 
\begin{pmatrix}
r_1 & k_2 & \cdots & k_d \\
k_1 & r_2 & \cdots & k_d \\
\vdots  & \vdots  & \ddots & \vdots  \\
k_1 & k_2 & \cdots & r_d 
\end{pmatrix}.\] 
We observe that 
\[ \overline{A}+sI_{d}= \begin{pmatrix}
k_1 & k_2 & \cdots & k_d \\
k_1 & k_2 & \cdots & k_d \\
\vdots  & \vdots  & \ddots & \vdots  \\
k_1 & k_2 & \cdots & k_d 
\end{pmatrix}\] 
has rank $1$. Consequently, $-s$ is an eigenvalue of $\overline{A}$ with multiplicity at least $d-1$. Additionally, by part $(1)$, $G$ is $r$-regular, hence $\lambda=r$ is the remaining eigenvalue of $\overline{A}$. Consequently,
\[ p_{\overline{A}}(t)=(t-r)(t+s)^{d-1} .\] 
By Proposition \ref{prop:join_spetrum}, we conclude that 
\[ p_{G}(t)=(t-r)(t+s)^{d-1} \dfrac{\prod_{i=1}^d p_{G_i}(t)}{\prod_{i=1}^d  (t-r_i)}. \]
\end{proof}

\end{cor}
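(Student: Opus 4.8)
The plan is to handle the two parts in turn: part (1) is a direct degree count in the join graph, and part (2) then reduces, via Proposition \ref{prop:join_spetrum}, to computing the characteristic polynomial of the $d\times d$ matrix $\overline A$.

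For part (1), I would fix a vertex $v$ belonging to the block $G_i$. In the join $G$, the neighbours of $v$ are its $r_i$ neighbours inside $G_i$ together with every vertex outside $G_i$, of which there are $n-k_i$, since the join adds all edges between distinct blocks. Hence $\deg_G(v)=r_i+(n-k_i)=n-(k_i-r_i)=n-s=r$, and this is independent of $i$ and of $v$; so $G$ is $r$-regular, and it visibly has $n=k_1+\cdots+k_d$ vertices.

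For part (2), Proposition \ref{prop:join_spetrum} gives $p_G(t)=p_{\overline A}(t)\,\dfrac{\prod_{i=1}^d p_{G_i}(t)}{\prod_{i=1}^d(t-r_i)}$, so it suffices to prove $p_{\overline A}(t)=(t-r)(t+s)^{d-1}$. The key observation is that, since $r_i+s=k_i$ for every $i$, the matrix $\overline A+sI_d$ has all $d$ of its rows equal to $(k_1,\dots,k_d)$; hence it has rank $1$, its kernel has dimension $d-1$, and $-s$ is an eigenvalue of $\overline A$ of multiplicity at least $d-1$. For the remaining eigenvalue I would invoke part (1): it exhibits $\one_n=(\one_d)^\otimes$ as an eigenvector of $A$ with eigenvalue $r$, so by Equation \eqref{eq:tensor expansion} (and the obvious injectivity of $w\mapsto w^\otimes$) $\one_d$ is an eigenvector of $\overline A$ with eigenvalue $r$; alternatively one reads $r$ off the trace, $\operatorname{tr}\overline A=\sum_i r_i=n-ds$. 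Since $n\ge 1$ forces $r=n-s\neq -s$, a monic degree-$d$ polynomial with root $-s$ of multiplicity $\ge d-1$ and a distinct root $r$ must be exactly $(t-r)(t+s)^{d-1}$. Substituting this into Proposition \ref{prop:join_spetrum} yields the asserted formula.

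The computation is short throughout; the one place to be a little careful is the eigenvalue bookkeeping for $\overline A$ — that the $(-s)$-eigenspace is honestly $(d-1)$-dimensional and that $r$ is not equal to $-s$ — but the rank-$1$ description of $\overline A+sI_d$ together with $n\ge 1$ dispatches both points at once, so I expect no genuine obstacle.
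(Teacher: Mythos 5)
Your proposal is correct and follows essentially the same route as the paper: the same degree count for part (1), the same rank-one observation for $\overline A+sI_d$, and the identification of $r$ as the remaining eigenvalue before substituting into Proposition \ref{prop:join_spetrum}. Your justification that $r$ really is the last eigenvalue (via $\overline A\one_d=r\one_d$ or the trace, plus $r\neq -s$) is slightly more explicit than the paper's one-line appeal to $r$-regularity, but it is the same argument.
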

\subsection{A simple construction of Ramanujan graphs} \label{section:ramanujan_construction}

We discuss some applications of Corollary \ref{cor:equal_difference} to the construction of Ramanujan graphs. We first recall the definition of these graphs (see \cite[Chapter 3]{[CRS]} and \cite{[Murty]} for further details.) We also recommend \cite{hoory2006expander} for a beautiful survey of some surprising applications and occurrence of Ramanujan graphs in various parts of mathematics, physics, communications networks and computer science.) 
\begin{definition} (see \cite[Definition 3.5.4]{[CRS]})
Let $G$ be a connected $r$-regular graph with $k$ vertices, and 
let $r=\displaystyle \lambda _{1}\geq \lambda_{2}\geq \cdots \geq \lambda_{n}$ be the eigenvalues of the adjacency matrix of $G$.  Since $G$ is connected and $r$-regular, its eigenvalues satisfy 
\( |\lambda_i| \leq r, 1 \leq i \leq n.\)
Let 
\[\lambda (G)=\max_{|\lambda_i|<r}|\lambda_{i}| .\]
The graph $G$ is a \textit{Ramanujan graph} if  
\[ \lambda (G)\leq 2{\sqrt {r -1}} .\] 
\end{definition}

The following proposition provides a construction of Ramanujan graphs.
\begin{prop} \label{prop:rm1}
Let $d\geq 2$ and, for $1\leq i\leq d$, let $G_i$ be $r_i$-regular Ramanujan graphs with $k_i$ vertices. Suppose further that the $G_i$'s satisfy the same conditions as in Corollary \ref{cor:equal_difference}, namely  
\[ k_1-r_1=k_2-r_2=\ldots=k_d-r_d=s .\] 
Let $G$ be the join graph of $G_1, G_2, \ldots, G_d$ and
\( n=k_1+ k_2+ \ldots+k_d\). 
Then $G$ is a Ramanujan graph if and only if 
\[ s \leq 2 (\sqrt{n}-1) .\] 
\end{prop}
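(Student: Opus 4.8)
The plan is to write out the entire spectrum of $G$ using Corollary~\ref{cor:equal_difference}, identify $\lambda(G)$ from it, and compare $\lambda(G)$ with the threshold $2\sqrt{r-1}$, where $r=n-s$. By Corollary~\ref{cor:equal_difference}, $G$ is $r$-regular on $n$ vertices with $r=n-s$, and $p_G(t)=(t-r)(t+s)^{d-1}\prod_{i=1}^d p_{G_i}(t)/(t-r_i)$. Since each $G_i$ is connected and $r_i$-regular, $r_i$ is a simple eigenvalue of $G_i$, so the eigenvalues of $G$, with multiplicity, are $r$ (once), $-s$ (with multiplicity $d-1$), and for each $i$ the $k_i-1$ eigenvalues of $G_i$ other than $r_i$. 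Two easy remarks control their sizes: $d\ge 2$ forces $n>k_i$, hence $r_i=k_i-s<n-s=r$ and every eigenvalue of $G_i$ has modulus $\le r_i<r$; and $s=k_i-r_i\le k_i$ for all $i$ gives $s\le\min_i k_i\le n/d\le n/2\le r$, with equality throughout only if $G=K_2$, a case I exclude, so $s<r$. As $G$ is connected and $r$-regular, $r$ is its simple Perron eigenvalue; thus every other eigenvalue of $G$ has modulus $<r$, and therefore
\[
\lambda(G)=\max\Bigl(s,\ \max_{1\le i\le d}\lambda^{*}(G_i)\Bigr),\qquad \lambda^{*}(G_i):=\max\{\,|\mu|\ :\ \mu\text{ an eigenvalue of }G_i,\ \mu\ne r_i\,\},
\]
and $\lambda(G)\ge s$ since the eigenvalue $-s$ really occurs ($d-1\ge 1$).

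Next I would bound each $\lambda^{*}(G_i)$. If $G_i$ is non-bipartite, then $r_i$ is its unique eigenvalue of modulus $r_i$, so $\lambda^{*}(G_i)=\lambda(G_i)\le 2\sqrt{r_i-1}\le 2\sqrt{r-1}$, using that $G_i$ is Ramanujan and $r_i<r$. If $G_i$ is bipartite, then $-r_i$ is an eigenvalue of $G_i$, so $\lambda^{*}(G_i)=r_i$; but a connected $r_i$-regular bipartite graph has $k_i\ge 2r_i$, whence $r_i=k_i-s\le k_i/2\le s$, i.e.\ $\lambda^{*}(G_i)=r_i\le s$. In both cases $\lambda^{*}(G_i)\le\max(s,2\sqrt{r-1})$, hence $\lambda(G)\le\max(s,2\sqrt{r-1})$ by the formula above.

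Finally, from $s\le\lambda(G)\le\max(s,2\sqrt{r-1})$ I read off the dichotomy: if $s\le 2\sqrt{r-1}$ then $\lambda(G)\le 2\sqrt{r-1}$ and $G$ is Ramanujan; if $s> 2\sqrt{r-1}$ then $\lambda(G)\ge s>2\sqrt{r-1}$ and it is not. Hence $G$ is Ramanujan if and only if $s\le 2\sqrt{r-1}=2\sqrt{n-s-1}$, and squaring this inequality between nonnegative numbers converts it into $(s+2)^2\le 4n$, i.e.\ $s\le 2(\sqrt n-1)$, which is the asserted criterion. I expect the delicate point to be the bipartite case in the second paragraph: one has to notice that a bipartite $G_i$ injects the eigenvalue $-r_i$ into $G$ — an eigenvalue of modulus $r_i$ that the Ramanujan hypothesis on $G_i$ alone does not bound, since $\lambda(G_i)$ by definition ignores $-r_i$ — and then see that the regularity constraint $k_i\ge 2r_i$ forces $r_i\le s$, which is exactly what keeps this eigenvalue below the threshold whenever $s$ is. The multiplicity bookkeeping and the harmless exceptional graph $K_2$ are the only other things needing a word.
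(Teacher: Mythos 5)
Your proof is correct and follows the same strategy as the paper's: read off the full spectrum of $G$ from Corollary \ref{cor:equal_difference}, observe that $\lambda(G)=\max\bigl(s,\max_i\lambda^*(G_i)\bigr)$ with the $G_i$-contributions dominated by $2\sqrt{r-1}$, and reduce the Ramanujan condition to $s\le 2\sqrt{r-1}=2\sqrt{n-s-1}$, which squares to $s\le 2(\sqrt n-1)$. The one place where you go beyond the paper is worth highlighting: the paper's proof simply invokes ``each $G_i$ is Ramanujan'' together with $r_i<r$, which bounds only $\lambda(G_i)$ and silently ignores that a bipartite $G_i$ contributes the eigenvalue $-r_i$ to $G$ --- an eigenvalue of modulus $r_i$ that the Ramanujan hypothesis on $G_i$ does not control, since the definition of $\lambda(G_i)$ excludes it. Your observation that a connected $r_i$-regular bipartite graph satisfies $k_i\ge 2r_i$, hence $r_i\le s$, is exactly what is needed to close this gap, and it is not in the paper. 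The exclusion of the degenerate case $G=K_2$ (where $s=r$ and $\lambda(G)$ is a maximum over an empty set) is also a reasonable and harmless precaution.
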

\begin{proof}
Corollary \ref{cor:equal_difference} describes the eigenvalues of $G$. Taking into account that the valency $r$ of $G$ is greater than the valency $r_i$ of each $G_i$, and that each $G_i$ is Ramanujan, $G$ is Ramanujan if and only if $s\leq 2\sqrt{r-1}=2\sqrt{n-s-1}$, if and only if $s^2+4s-4n+4\leq 0$, if and only if $s\leq 2\sqrt{n}-2$.
\end{proof}
Here is a special case of this construction. 
\begin{cor} \label{prop:Ramanujan_1}
Let $G$ be a $r$-regular graph with $k$ vertices. Let $G^{d}$ be the join graph of $d$ identical copies of $G$. Then there exists a natural number $d_0$ such that for all $d \geq d_0$, $G^d$ is a Ramanujan graph. 

\end{cor}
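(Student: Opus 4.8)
The plan is to specialize Proposition \ref{prop:rm1} to the case $G_1=G_2=\dots=G_d=G$, where $G$ is a fixed $r$-regular graph on $k$ vertices, and then show that the numerical condition $s\leq 2\sqrt n-2$ is eventually satisfied as $d$ grows. First I would verify that the $d$ copies of $G$ trivially satisfy the hypothesis of Corollary \ref{cor:equal_difference}: each copy has $k-r=s$ with the \emph{same} value $s:=k-r$, which is a constant not depending on $d$. Here $n=dk$, so $n\to\infty$ as $d\to\infty$, while $s$ stays fixed. Thus the condition $s\leq 2\sqrt n-2$ from Proposition \ref{prop:rm1} becomes $s\leq 2\sqrt{dk}-2$, which clearly holds for all sufficiently large $d$; explicitly one may take any $d_0$ with $d_0\geq \dfrac{(s+2)^2}{4k}$, and then $G^d$ is Ramanujan for all $d\geq d_0$.

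One subtlety to address is connectedness: the definition of Ramanujan graph (and hence Proposition \ref{prop:rm1}, which invokes Corollary \ref{cor:equal_difference} and the Ramanujan definition) presupposes that $G$ is connected. For $d\geq 2$, the join $G^d$ is automatically connected regardless of whether $G$ itself is connected, since every vertex in one copy is adjacent to every vertex in every other copy. So I would either assume $G$ is connected from the outset (matching the hypothesis of Proposition \ref{prop:rm1}, which speaks of Ramanujan graphs $G_i$ and thus implicitly connected ones) or note that for the purpose of applying Proposition \ref{prop:rm1} we only need each $G_i=G$ to be Ramanujan. If one wants to drop the connectedness assumption on $G$, a short additional remark suffices: when $G$ is disconnected, $r$ is still an eigenvalue of each copy with multiplicity equal to the number of components, but in $G^d$ only a one-dimensional piece survives as the Perron eigenvalue $r'=n-s$, and the reasoning in Proposition \ref{prop:rm1} about $\lambda(G^d)$ goes through verbatim since the "old" eigenvalues of the copies of $G$ all have absolute value $\leq r < r'$. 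I would likely just state the corollary under the (mild, and consistent with the section) hypothesis that $G$ is connected.

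The main obstacle — really the only thing requiring care — is bookkeeping the value of $\lambda(G^d)$ correctly so that Proposition \ref{prop:rm1} genuinely applies. By Corollary \ref{cor:equal_difference}, the spectrum of $G^d$ consists of the Perron value $r'=n-s$, the value $-s$ with multiplicity $d-1$, and the "old" eigenvalues: each eigenvalue $\lambda_j^G$ of $G$ with $j\geq 2$ repeated $d$ times. Since $G$ is Ramanujan, $|\lambda_j^G|\leq 2\sqrt{r-1}$ for $j\geq 2$, and $|{-s}|=s$; meanwhile $r'=n-s > r \geq 2\sqrt{r-1}$ once $r\geq 1$ and $n>s$, so $r'$ is the unique eigenvalue of largest absolute value and $\lambda(G^d)=\max\{2\sqrt{r-1},\,s\}$ provided $r'>s$, i.e. $n-s>s$, i.e. $n>2s$, which holds for large $d$. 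Then $G^d$ is Ramanujan iff $\max\{2\sqrt{r-1},s\}\leq 2\sqrt{r'-1}=2\sqrt{n-s-1}$; the first term obeys this since $r-1\leq n-s-1$ (as $r\leq n-s$), and the second is exactly the inequality $s\leq 2\sqrt n-2$ handled above. Hence choosing $d_0$ large enough to force both $n=d_0k>2s$ and $s\leq 2\sqrt{d_0 k}-2$ completes the proof.
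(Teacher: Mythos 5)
Your proof is correct and follows essentially the same route as the paper: specialize Proposition \ref{prop:rm1} to $d$ identical copies of $G$, note $s=k-r$ is fixed while $n=dk\to\infty$, and solve $s\leq 2(\sqrt{dk}-1)$ for $d$, yielding the same threshold $d_0=\left\lceil (s+2)^2/(4k)\right\rceil$ as in the paper. Your extra remarks on connectedness and on verifying $\lambda(G^d)$ directly are more careful than the paper's one-line appeal to Proposition \ref{prop:rm1} (which, as you implicitly note, presupposes that $G$ itself is Ramanujan and connected), but they do not change the argument.
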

\begin{proof}
By Proposition \ref{prop:rm1}, $G^d$ is a Ramanujan graph if and only if 
\[ k-r \leq 2(\sqrt{dk}-1) .\] 
This is equivalent to 
\[ d \geq \frac{1}{k} \left(\frac{k-r}{2}+1 \right)^2 .\] 
We therefore can take 
\[ d_0= \left\lceil \frac{1}{k} \left(\frac{k-r}{2}+1 \right)^2 \right\rceil .\] 

\end{proof}

\subsection{Spectrum of the joined union of graphs} \label{sec:joined_union_general}

Let $G$ be a graph with $d$ vertices $\{v_1, v_2, \ldots, v_d\}$. Let $G_1, G_2, \ldots, G_d$ be graphs. The joined union $G[G_1, G_2, \ldots, G_d]$ is obtained from the union of $G_1, \ldots, G_d$ by joining with an edge each pair of a vertex from $G_i$ and a vertex from $G_j$ whenever $v_i$ and $v_j$ are adjacent in $G$ (see \cite{[joined_union]} for further details). Let $A_{G} =(a_{ij})$ be the adjacency matrix of $G$ and $A_{1}, A_{2}, \ldots, A_{d}$ be the adjacency matrices of $G_1, G_2, \ldots, G_d$ respectively. The adjacency matrix of $G[G_1, G_2, \ldots, G_d]$ has the following form 
\begin{equation} \label{eq:matrix_A}
A=\left(\begin{array}{c|c|c|c}
A_1 & a_{12}\ind & \cdots & a_{1d}\ind \\
\hline
a_{21}\ind & A_2 & \cdots & a_{2d}\ind \\
\hline
\vdots & \vdots & \ddots & \vdots \\
\hline
a_{d1}\ind & a_{d2}\ind & \cdots & A_d
\end{array}\right).
\end{equation}

\begin{rem} \label{rem:special_case}
When $G=K_d$, the complete graph on $d$ vertices, $G[G_1, G_2, \ldots, G_d]$ is exactly the join graph of $G_1, G_2, \ldots, G_d$ discussed in  \cref{section:graph_join}. 
\end{rem}

By  \cref{thm:join_spectrum}, the spectrum of $G[G_1, G_2, \ldots, G_d]$ can be described by the spectra of $G_i$ and an auxiliary matrix describing the interconnections between $G_i$. More precisely, we have the following proposition.
\begin{prop} \label{prop:joined_union}
Assume that for each $1 \leq i \leq d$, $G_i$ is a $r_i$-regular graph with $k_i$ nodes. Let $G[G_1, G_2, \ldots, G_d]$ be the joined union graph. Let $\{\lambda_{1}^{G_i}=r_i, \ldots, \lambda_{k_i}^{G_i} \}$ be the spectrum of $G_i$ as described in Corollary \ref{cor:orthogonal}. Then the spectrum of $A$ is the union of $\Spec(\overline{A})$ and the following multiset 
\[ \{\lambda_j^{A_i} \}_{1 \leq i \leq d, 2 \leq j \leq k_i} .\] 

Here $\overline{A}$ is the following $d \times d$ matrix, whose entries are the row sums of the blocks in the matrix $A$ 
\[ \overline A= 
\begin{pmatrix}
r_{A_1} & a_{12}k_2 & \cdots & a_{1n}k_d \\
a_{21}k_1 & r_{A_2} & \cdots & a_{2n}k_d \\
\vdots  & \vdots  & \ddots & \vdots  \\
a_{d1}k_1 & a_{d2}k_2 & \cdots & r_{A_d} 
\end{pmatrix} .\] 

\end{prop}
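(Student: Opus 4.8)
The plan is to reduce Proposition \ref{prop:joined_union} directly to Theorem \ref{thm:join_spectrum}, observing that the matrix $A$ above is precisely a $\mathbf{k}_d$-join of NRR matrices in the sense of equation \eqref{eq:def of join}. First I would check the hypotheses: since each $G_i$ is an undirected $r_i$-regular graph, its adjacency matrix $A_i$ is real symmetric (hence normal) and $r_i$-row regular, so $r_{A_i}=r_i$; and each off-diagonal block $a_{ij}\ind$ is the constant-$a_{ij}$ matrix, where $a_{ij}$ is the $(i,j)$-entry of $A_G$. Thus $A$ has exactly the form \eqref{eq:def of join}, and Theorem \ref{thm:join_spectrum} applies verbatim. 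Note that $A_G$ itself need not be symmetric (the statement allows weighted digraphs $G$), but this is exactly the generality in which Theorem \ref{thm:join_spectrum} was proved, as emphasized in Remark \ref{rem:difference}.

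Next I would translate the polynomial identity of Theorem \ref{thm:join_spectrum} into the claimed description of the spectrum as a multiset. By Proposition \ref{prop:circulant eigenvectors}, for each $i$ and each $2\leq j\leq k_i$ the vector $w_{i,j}$ is an eigenvector of $A$ with eigenvalue $\lambda_j^{A_i}=\lambda_j^{G_i}$; these are the old NRR eigenvalues, accounting for the factor $\prod_{1\leq i\leq d,\,2\leq j\leq k_i}(t-\lambda_j^{A_i})$ in $p_A(t)$. The remaining degree-$d$ factor is $\overline{p}_A(t)=p_{\overline{A}}(t)$ by Theorem \ref{thm:join_spectrum}, and one checks that here $\overline{A}$ is exactly the displayed $d\times d$ matrix: its diagonal entries are $r_{A_i}=r_i$ and its $(i,j)$-entry for $i\neq j$ is $a_{ij}k_j$, i.e.\ the sum of the entries in any row of the block $a_{ij}\ind$. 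Hence $\Spec(A)$, as a multiset, is the disjoint union of $\Spec(\overline{A})$ and $\{\lambda_j^{A_i}\}_{1\leq i\leq d,\,2\leq j\leq k_i}$, which is the assertion.

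The only genuine subtlety — and the step I would be most careful about — is the multiplicity bookkeeping when eigenvalues coincide: some $\lambda_j^{A_i}$ may equal an eigenvalue of $\overline{A}$, or eigenvalues from different blocks may agree. This is precisely why the statement is phrased in terms of multisets rather than sets, and why one should invoke the full polynomial identity $p_A(t)=p_{\overline{A}}(t)\prod_{i,j}(t-\lambda_j^{A_i})$ rather than merely the set-theoretic union of spectra. No separate linear-independence argument is needed here, since all of that work has already been done inside the proof of Theorem \ref{thm:join_spectrum}; the present proof is genuinely just a specialization and a change of notation. Accordingly, the write-up can be short: state that $A$ has the form \eqref{eq:def of join} with $A_i$ normal and $r_i$-row regular, identify $\overline{A}$, and quote Theorem \ref{thm:join_spectrum} together with Proposition \ref{prop:circulant eigenvectors}.
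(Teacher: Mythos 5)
Your proposal is correct and matches the paper's approach exactly: the paper derives this proposition as an immediate specialization of Theorem \ref{thm:join_spectrum}, with each $A_i$ normal and $r_i$-row regular and $\overline{A}$ read off from the block row sums. Your extra care about multiset multiplicities via the polynomial identity is a sound (and slightly more explicit) rendering of the same argument.
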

\begin{proof}

    This proposition follows from \cref{thm:join_spectrum}. To see this, we recall that the adjacency matrix of $G[G_1, G_2, \ldots, G_d]$ has the form described in \cref{eq:matrix_A} where $A_G$ is the adjacency matrix of $G$ and $A_1, A_2, \ldots, A_d$ are the adjacency matrices of $G_1, G_2, \ldots, G_d$ respectively. Since $G_i$ is an undirected graph, we know that $A_i$ is symmetric. Furthermore, by our assumption, $G_i$ is $r_i$-regular, $A_i$ is a normal, row regular with $r_i = r_{A_i}.$ Therefore, we can apply \cref{thm:join_spectrum} to obtain the above description for the spectrum of $A.$
\end{proof}

\iffalse
\begin{expl} \label{expl:acyclic}
Let us consider the case where $G$ is a directed acyclic graph. Using the topological ordering induced by $G$, we can assume that (up to a permutation) the adjacency matrix of $G$ is upper triangular. In this case, the matrix $A$ is upper triangular. Consequently, $\overline{A}$ is also upper triangular. Therefore, $\Spec(\overline{A})$ coincides with the union of the spectrum of of diagonal blocks. In other words, $\Spec(G[G_1, G_2, \ldots, G_d])$ is precisely $\cup_{i=1}^{d} \Spec(G_i)$. 

\end{expl}
\fi
Let us consider another special case where the $G_i$ are all $r$-regular graphs with $k$ vertices. In this case, we have $k_1= k_2 = \ldots =k_d=k$ and $r_{A_1}=r_{A_2}=\ldots = r_{A_d} =r$. Therefore, by \cref{prop:joined_union}, we have the following.

\begin{prop} \label{prop:joined_union_equal_case}
Assume that for each $1 \leq i \leq d$, $G_i$ is a $r$-regular graph with $k$ vertices. Let $G[G_1, G_2, \ldots, G_d]$ be the joined union graph. Let $\{\lambda_{1}^{G_i}=r, \ldots, \lambda_{k_i}^{G_i} \}$ be the spectrum of $G_i$ as described in Corollary \ref{cor:orthogonal}. Then the spectrum of $A$ is the union the multiset 
\[ \{\lambda_j^{A_i} \}_{1 \leq i \leq d, 2 \leq j \leq k_i} ,\] 
and the following multiset
\[ \{r +k \sigma| \sigma \in \Spec(A_G) \}.\] 
\end{prop}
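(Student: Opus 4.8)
The plan is to deduce Proposition \ref{prop:joined_union_equal_case} directly from Proposition \ref{prop:joined_union} by analyzing the structure of $\overline A$ in this special case. First I would note that, with every $G_i$ being $r$-regular on $k$ vertices, we have $r_{A_i}=r$ and $k_i=k$ for all $i$, so the matrix $\overline A$ from Proposition \ref{prop:joined_union} becomes
\[
\overline A=\begin{pmatrix}
r & a_{12}k & \cdots & a_{1d}k \\
a_{21}k & r & \cdots & a_{2d}k \\
\vdots & \vdots & \ddots & \vdots \\
a_{d1}k & a_{d2}k & \cdots & r
\end{pmatrix}
= rI_d + k\,A_G,
\]
where we use that the diagonal entries of $A_G$ are $0$ (no loops) and the off-diagonal entries are exactly the $a_{ij}$. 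This is the key algebraic observation: $\overline A$ is an affine function of $A_G$.

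Next I would invoke the elementary spectral mapping fact that if $\sigma$ is an eigenvalue of $A_G$ (with some multiplicity, counted via generalized eigenspaces if $A_G$ fails to be diagonalizable, e.g.\ for a general digraph), then $r+k\sigma$ is an eigenvalue of $rI_d+k A_G=\overline A$ with the same multiplicity; conversely every eigenvalue of $\overline A$ arises this way. Hence $\Spec(\overline A)=\{\,r+k\sigma \mid \sigma\in\Spec(A_G)\,\}$ as multisets. Substituting this into the conclusion of Proposition \ref{prop:joined_union}, which states that $\Spec(A)$ is the disjoint union of $\Spec(\overline A)$ with the old NRR multiset $\{\lambda_j^{A_i}\}_{1\le i\le d,\ 2\le j\le k_i}$, yields precisely the claimed description.

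I expect no serious obstacle here: the proof is essentially a one-line reduction once the identity $\overline A = rI_d + kA_G$ is recorded. The only point requiring a little care is bookkeeping of multiplicities — making sure the spectral shift $\sigma\mapsto r+k\sigma$ is stated at the level of multisets (equivalently, that it preserves the sizes of generalized eigenspaces), so that the count of $n=dk$ eigenvalues of $A$ is correct: $d$ eigenvalues from $\Spec(\overline A)$ and $d(k-1)$ from the old NRR part. If one wishes to be fully rigorous about the non-diagonalizable case, one notes that $rI_d+kA_G$ and $A_G$ are simultaneously triangularizable (indeed $rI_d+kA_G = p(A_G)$ for the polynomial $p(x)=r+kx$), so the Jordan structure is transported verbatim, and in particular the characteristic polynomials satisfy $p_{\overline A}(t)=k^d\,p_{A_G}\!\big((t-r)/k\big)$, which is an equivalent way to phrase the result.
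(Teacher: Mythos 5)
Your proposal is correct and follows exactly the paper's own argument: identify $\overline A = rI_d + kA_G$ and apply the spectral mapping $\sigma \mapsto r+k\sigma$ together with Proposition \ref{prop:joined_union}. The extra care you take with multiplicities and the non-diagonalizable case is a welcome refinement but does not change the route.
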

\begin{proof}
In this case, the matrix $\overline{A}$ is of the following form
\[ \overline{A}= r I_d +k A_{G}, \]
where $A_G$ is the adjacency matrix of $G$. Thus the spectrum of $\overline{A}$ consists of the roots of the characteristic polynomial 
\[ p_{\overline{A}}(t) = \det(tI_d-rI_d-kA_G).\]
Therefore, the spectrum of $\overline{A}$ is given by $r+k \Spec(A_G)$. 
\end{proof}

\iffalse 
Let us keep the same assumptions as in Proposition \ref{prop:joined_union_equal_case}. Assume further that $G$ is a $\Delta$-regular graph with $d$ vertices. Then the joined union $G[G_1, G_2, \ldots, G_d]$ is a $(r+\Delta k)$-regular graph with $kd$ vertices. We have the following simple corollary that generalizes Corollary \ref{prop:Ramanujan_1}, since graph joins are particular cases of joined unions (see Remark \ref{rem:special_case}).

\begin{cor} \label{cor:rm2}
Assume that $r,k, \Delta$ are fixed. Assume further that $G$ is a connected, non-bipartite Ramanujan graph. Then there exists a number $d_0$ such that if $d \geq d_0$ then $G[G_1, G_2, \ldots, G_d]$ is a Ramanujan graph. 
\end{cor}
\begin{proof}
Since $G$ is a connected, non-bipartite, and Ramanujan graph, the only eigenvalue $\sigma$ of $A_{G}$ such that $|\sigma|=\Delta$ is $\sigma=\Delta.$ For the other eigenvalues $\sigma \in \Spec(A_{G})$, we have 
\[ |\sigma| \leq 2 \sqrt{\Delta-1} .\] 
By triangle inequality, we have 
\[ |r+ k \sigma| \leq r+k|\sigma| \leq r + k \sqrt{\Delta-1} .\] 
By Proposition \ref{prop:joined_union_equal_case}, to guarantee that $G[G_1, G_2, \ldots, G_d]$ is Ramanujan, we only need to make sure that the following inequalities hold 
\[ |\lambda_{j}^{A_i}| \leq 2 \sqrt{r+\Delta d -1}, \quad \text{for all} \quad 2 \leq j \leq k, 1 \leq i \leq d ,\] 
and 
\[ r+k \sqrt{\Delta-1} \leq 2 \sqrt{r+\Delta d -1} .\] 
Since $r, k, \Delta$ are fixed, there exists an integer $d_0$ such that, if $d \geq d_0$, the above inequalities hold.

\end{proof}

\fi
\subsection{Energy of the joined union of graphs}  \label{sec:graph_energy}
The concept of graph energy originates from problems in theoretical chemistry. Specifically, the mathematical definition of graph energy was inspired by early studies on the total $\pi$-electron energy of molecules represented by molecular graphs (see \cite{coulson1940calculation, mcclelland1971properties, gutman1977acyclic}). Interest in graph energy remained relatively dormant until around 2000, when a small group of mathematicians mutually found their interest in this topic, leading to an explosion of research. For a more detailed discussion on the historical development of graph energy, we refer the reader to the survey article \cite{gutman2019graph}.

Our interest in this topic arises from our experimental observation that the graph energy seems to increase when we apply the join operation on graphs. The goal of this section is to formalize this observation and propose a precise question about the relationship between the energy of the joined union of graphs and the energy of individual graphs (see \cref{question:inequality}).

We first recall the definition of energy of a graph. 
\begin{definition} 
Let $G$ be a graph with $d$ nodes. Suppose that 
\[ \Spec(G)=\{\lambda_1, \lambda_2, \ldots, \lambda_d \} .\] 
The energy of $G$ is defined to be the following sum (see \cite[Section 9.2.2]{[CRS]} for further discussions.)
\[ E(G)=\sum_{i=1}^d |\lambda_i| .\] 
\end{definition}
\begin{expl}
If $G=K_d$ the complete graph with $d$ vertices. Then 
\[ \Spec(G)=\{[-1]_{d-1}, [d-1]_{1} \}, \] 
where $[a]_m$ means that $a$ has multiplicity $m.$ We conclude that the energy of $K_d$ is $2(d-1).$

\end{expl}

Let $G_i$ and $G$ be as at the beginning of \cref{section:graph_join}, namely \[ G=G_1+G_2+ \ldots+G_d=K_d[G_1, G_2, \ldots, G_d]. \]

We have the following inequality. 
\begin{thm} \label{prop:first_estimate}
The energy of $G$ is strictly larger than the sum of the energy of $G_i$: 
\[ E(G)> \sum_{i=1}^d E(G_i) .\] 
\end{thm}
\begin{proof}
Let $\{\lambda_1, \lambda_2, \ldots, \lambda_d \}$ be the eigenvalues of $\overline{A}$ where $A$ and $\overline{A}$ are the matrices defined at the beginning of \cref{section:graph_join}, namely 
\[ \overline A= 
\begin{pmatrix}
r_1 & k_2 & \cdots & k_d \\
k_1 & r_2 & \cdots & k_d \\
\vdots  & \vdots  & \ddots & \vdots  \\
k_1 & k_2 & \cdots & r_d 
\end{pmatrix} .\] 
Note that $\lambda_i \in \R$ as they are also eigenvalues of $A$, which is real and symmetric. By Proposition \ref{prop:join_spetrum}, we have 
\[ E(G)-\sum_{i=1}^{d} E(G_i)=\sum_{i=1}^{d} |\lambda_d|- \sum_{i=1}^{d} r_i .\] 
We also note that $\sum_{i=1}^d \lambda_i=\Tr(\overline{A})=\sum_{i=1}^d r_i.$ Therefore, we have 
\[ E(G)-\sum_{i=1}^{d} E(G_i)= \sum_{i=1}^{d} (|\lambda_i|-\lambda_i)=2 \sum_{ \lambda_i<0} |\lambda_i|  .\] 
Hence, to show that $E(G)>\sum_{i=1}^d E(G_i)$, we only need to show that for some $i$, $\lambda_i<0.$

Let $s_i=k_i-r_i>0$. Without loss of generality, we can assume that 
\[ k_1-r_1 \leq k_2-r_2 \leq \ldots \leq k_d-r_d .\] 
Let us consider 
\begin{align*} p_{\overline{A}}(-s_1)&= p_{\overline{A}}(r_1-k_1)= \det((r_1-k_1)-\overline{A}) \\
&=(-1)^d \det \begin{pmatrix}
k_1 & k_2 & \cdots & k_d \\
k_1 & r_2+k_1-r_1 & \cdots & k_d \\
\vdots  & \vdots  & \ddots & \vdots  \\
k_1 & k_2 & \cdots & r_d+k_1-r_1
\end{pmatrix} \\
&= (-1)^d k_1 \det \begin{pmatrix}
1 & k_2 & \cdots & k_d \\
1 & r_2+k_1-r_1 & \cdots & k_d \\
\vdots  & \vdots  & \ddots & \vdots  \\
1 & k_2 & \cdots & r_d+k_1-r_1
\end{pmatrix}.
\end{align*} 
By adding $-k_i$ times the first column to the $i$-th column, we see that the later determinant is also equal to 

\[ \det \begin{pmatrix}
1 & 0 & \cdots & 0 \\
1 & (k_1-r_1)-(k_2-r_2) & \cdots & 0 \\
\vdots  & \vdots  & \ddots & \vdots  \\
1 & 0 & \cdots & (k_1-r_1)-(k_d-r_d)
\end{pmatrix}=(s_1-s_2)(s_1-s_3) \ldots (s_1-s_d) .\] 
We conclude that  
\[ p_{\overline{A}}(-s_1)=(-1)^d k_1 \prod_{j \neq 1} (s_1-s_j)=-k_1 \prod_{j \neq 1}(s_j-s_1) \leq 0 .\] 
By the same argument, we see that 
\[ p_{\overline{A}}(-s_2)=-k_2 \prod_{j \neq 2} (s_j-s_2)=k_2(s_2-s_1) \prod_{j>2} (s_j-s_2) \geq 0 .\] 
By the mean value theorem, $p_{\overline{A}}(t)$ has a real root on the interval $[-s_2, -s_1]$. In particular, at least one eigenvalue of $\overline{A}$ must be negative. This completes the proof. 
\end{proof}
\iffalse
\begin{rem}
Perhaps it is nice to give an example of two graphs $G_1, G_2$ where the above inequality fails. 
\end{rem}
\fi 

\begin{definition}
A graph $G$ with $d$ nodes is called hyperenergetic if $E(G) \geq 2(d-1).$
\end{definition}

\begin{thm} \label{thm:second_main}
Assume that $G_i$ are all $r$-regular with $k$ vertices. Assume further that $G$ is hyperenergetic. Then 
\[ E(G[G_1, G_2, \ldots, G_d]) \geq E(G)+\sum_{i=1}^d E(G_i) .\] 
The equality can happen, for example when $G$ and $G_i$ are all complete graphs. 
\end{thm}
\begin{proof}
Let $A$ be the adjacency matrix of $G[G_1, G_2, \ldots, G_d]$. Then the matrix $\overline{A}$ in Proposition \ref{prop:joined_union} has the following form 

\[ \overline A= 
\begin{pmatrix}
r & a_{12}k & \cdots & a_{1n}k \\
a_{21}k & r & \cdots & a_{2n}k \\
\vdots  & \vdots  & \ddots & \vdots  \\
a_{d1}k & a_{d2}k & \cdots & r 
\end{pmatrix}=rI_d+ k A_{G} .\] 
Let $\Spec(A_{G})=\{\lambda_1, \lambda_2, \ldots, \lambda_d \}$ then
\[ \Spec(\overline{A})=\{r+k \lambda_1, r+k \lambda_2, \ldots, r+k \lambda_d \}. \] 
By Proposition \ref{prop:joined_union}, we have
\begin{align*}
 E(G[G_1, G_2, \ldots, G_d])- E(G)-\sum_{i=1}^d E(G_i) &=\sum_{i=1}^{d}|r+k \lambda_i|- \sum_{i=1}^{d}|\lambda_i|- dr. 
\end{align*} 
We note that by the Perron-Frobenius Theorem, one of the eigenvalues of $A_{G}$ must be real and non-negative. Let us assume $\lambda_1 \geq 0.$ We then have
\begin{align*}
\sum_{i=1}^d |r+k \lambda_i|&=r+k \lambda_1+ \sum_{i=2}^d |r+k \lambda_i|\\
  & \geq r+k \lambda_1+\sum_{i=2}^d (k|\lambda_i|-r) \\
  &\geq k\sum_{i=1}^d |\lambda_i| -(d-2)r.
\end{align*} 
Consequently, we have 
\begin{align*}
 E(G[G_1, G_2, \ldots, G_d])- E(G)-\sum_{i=1}^d E(G_i) & \geq (k-1) \sum_{i=1}^d |\lambda_i|-2(d-1)r \\ 
 & \geq r (\sum_{i=1}^d |\lambda_i|-2(d-1))\\
 & \geq 0.
\end{align*}

Note that the second inequality follows from $k \geq r+1$ and the last inequality follows from the assumption that $G$ is hyperenergetic. 
\end{proof}

\begin{rem}
The above proof can be slightly generalized as follows. Suppose that $G$ is an undirected graph and the spectrum of $G$ consists of $n$ negative eigenvalues and $p$ non-negative eigenvalues. Suppose that the energy of $G$ satisfies 
\begin{equation} \label{eq:inequality1}
E(G) \geq d+n-p =2(d-p). 
\end{equation}
Then we have 
\[ E(G[G_1, G_2, \ldots, G_d]) \geq E(G)+\sum_{i=1}^d E(G_i) .\] 

We checked that all undirected graphs with at most $3$ nodes satisfy the Inequality \ref{eq:inequality1}.

\end{rem}

\begin{question} \label{question:inequality}
Suppose that $G_i$ are all regular graphs. Does the following inequality hold in general? 
\begin{equation} \label{eq:inequality}
E(G[G_1, G_2, \ldots, G_d]) \geq E(G)+\sum_{i=1}^d E(G_i)?
\end{equation}
\end{question}
We provide an answer to this question in a special case, namely for $d=2$. 
\begin{prop}
Let $G_1, G_2$ be two regular graphs and $G$ be a graph with $2$ nodes. Then \[ E(G[G_1, G_2]) \geq E(G)+ E(G_1)+E(G_2) .\] 
\end{prop}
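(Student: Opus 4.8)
The plan is to reduce the claim to an explicit, low-dimensional computation using Proposition~\ref{prop:joined_union}. Since $G$ has $2$ nodes, $A_G$ is a symmetric $2\times 2$ matrix, so $G$ is one of a very short list of graphs: the empty graph $\overline{K_2}$, the single edge $K_2$, or (if loops/weights are allowed in the ambient convention) a weighted variant; in the unweighted simple case it suffices to handle $G=\overline{K_2}$ and $G=K_2$. For $G=\overline{K_2}$ the joined union is just the disjoint union $G_1\sqcup G_2$, so $E(G[G_1,G_2])=E(G_1)+E(G_2)=E(G)+E(G_1)+E(G_2)$ and equality holds trivially. The substantive case is $G=K_2$, where $G[G_1,G_2]=G_1+G_2$ is the ordinary join, $E(G)=E(K_2)=2$, and the inequality to prove becomes $E(G_1+G_2)\geq E(G_1)+E(G_2)+2$.

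For the case $G=K_2$, I would invoke Proposition~\ref{prop:join_spetrum}: writing $A_i$ for the adjacency matrix of $G_i$ ($r_i$-regular on $k_i$ vertices), the spectrum of $G_1+G_2$ is the union of the ``old'' eigenvalues $\{\lambda_j^{A_i}\}_{i=1,2,\,2\le j\le k_i}$ together with the two eigenvalues $\mu_1,\mu_2$ of $\overline A=\begin{pmatrix} r_1 & k_2 \\ k_1 & r_2\end{pmatrix}$. The old eigenvalues contribute exactly $\sum_i\big(E(G_i)-r_i\big)$ to $E(G_1+G_2)$, so
\[
E(G_1+G_2)-E(G_1)-E(G_2) = |\mu_1|+|\mu_2| - r_1 - r_2.
\]
Now $\mu_1+\mu_2=\operatorname{Tr}\overline A = r_1+r_2$ and $\mu_1\mu_2 = r_1 r_2 - k_1 k_2 < 0$ (since $k_1,k_2\ge 1$ and in fact $k_i > r_i \ge 0$), so $\mu_1,\mu_2$ are real with opposite signs, say $\mu_1>0>\mu_2$. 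Then $|\mu_1|+|\mu_2| = \mu_1 - \mu_2 = \sqrt{(\mu_1+\mu_2)^2 - 4\mu_1\mu_2} = \sqrt{(r_1+r_2)^2 + 4(k_1k_2 - r_1 r_2)}$. Comparing with $r_1+r_2$, the desired inequality $|\mu_1|+|\mu_2|\ge r_1+r_2+2$ is equivalent, after squaring the nonnegative quantities, to $(r_1+r_2)^2 + 4(k_1 k_2 - r_1 r_2) \ge (r_1+r_2)^2 + 4(r_1+r_2) + 4$, i.e. to $k_1 k_2 - r_1 r_2 \ge r_1 + r_2 + 1$, i.e. to $k_1 k_2 \ge (r_1+1)(r_2+1)$. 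This last inequality holds because $k_i \ge r_i + 1$ for every regular graph on $k_i$ vertices of degree $r_i$ (a graph of degree $r_i$ has at least $r_i+1$ vertices).

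I would assemble the proof in this order: first dispose of $G=\overline{K_2}$ (or any $G$ whose adjacency matrix is the zero matrix) by the disjoint-union remark; then for $G$ with a nonzero $2\times 2$ symmetric adjacency matrix, apply Proposition~\ref{prop:join_spetrum} (resp.\ Proposition~\ref{prop:joined_union} for the weighted case), reduce to the two eigenvalues of $\overline A$, use the trace and determinant of $\overline A$ to show the two eigenvalues straddle $0$, and finish with the elementary inequality $k_1 k_2 \ge (r_1+1)(r_2+1)$. The only mild subtlety is bookkeeping the contribution of the ``old'' eigenvalues to the energy — one must remember that $\lambda_1^{A_i}=r_i$ is \emph{not} among the old eigenvalues $w_{i,j}$, so it is accounted for inside the $\mu$'s rather than double-counted. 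I expect the main (and only real) obstacle to be making sure the statement is interpreted with the right hypothesis on $G$: if $G$ is allowed to be an arbitrary weighted digraph on $2$ nodes the argument is the same but $\overline A$ need not be symmetric, in which case one should either restrict to the (symmetric) cases relevant here or argue that $\det\overline A = r_1 r_2 - a_{12}a_{21}k_1 k_2$ still forces real eigenvalues of opposite sign whenever $a_{12}a_{21}>0$ and handle $a_{12}a_{21}\le 0$ separately; I would state the proposition for undirected $G$, matching the phrasing ``$G$ be a graph with $2$ nodes,'' and keep the clean computation above.
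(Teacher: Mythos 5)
Your proposal is correct and follows essentially the same route as the paper: split into the trivial (empty/acyclic) case and the $K_2$ case, reduce to the two eigenvalues of $\overline{A}$ via Proposition \ref{prop:join_spetrum}, use $\det\overline{A}=r_1r_2-k_1k_2<0$ to see they have opposite signs, and conclude from $k_i\ge r_i+1$; your squaring step is algebraically identical to the paper's observation that $(r_1-r_2)^2+4(r_1+1)(r_2+1)=(r_1+r_2+2)^2$.
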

\begin{proof}
If $G$ is the cocomplete graph, we have 
\[ E(G[G_1, G_2]) =  E(G)+ E(G_1)+E(G_2) .\] 
Suppose now that $G=K_2$ the complete graph on $2$ nodes. The energy of $G$ is $E(G)=2.$ Suppose that $G_i$ is $r_i$ regular with $k_i$ vertices for $i \in \{1, 2 \}$. Let $\lambda_1, \lambda_2$ be the eigenvalues of $\overline{A}$ where 
\[ \overline{A}= \begin{pmatrix} r_1 & k_2 \\ k_1 & r_2 \end{pmatrix} .\] 
By Proposition \ref{prop:join_spetrum} we have 
\[ E(G[G_1, G_2])-E(G_1)-E(G_2)=|\lambda_1|+|\lambda_2|-(r_1+r_2) .\] 
We conclude that 
\[ \lambda_1, \lambda_2= \frac{(r_1+r_2) \pm \sqrt{(r_1-r_2)^2+4k_1 k_2}}{2} .\] 

We have $\det(\overline{A})=r_1r_2-k_1 k_2<0$ so one root of $\overline{A}$ is negative and the other is positive. Consequently 
\begin{align*}
|\lambda_1|+|\lambda_2|-r_1-r_2 &=\sqrt{(r_1-r_2)^2+4k_1k_2}-(r_1+r_2) \\ &\geq \sqrt{(r_1-r_2)^2+4(r_1+1)(r_2+1)}-(r_1+r_2) \\ & \geq (r_1+r_2+2)-(r_1+r_2)=2 .
\end{align*} 
In other words, we have 
\[ E(G[G_1, G_2]) \geq E(G)+ E(G_1)+E(G_2) .\] 
\end{proof}

Another situation where we can verify Inequality \ref{eq:inequality} is the following. 
\begin{prop}
Let $G_i$ be $r_i$-regular with $k_i$ vertices. Assume further that 
\[ k_1-r_1=k_2-r_2=\ldots=k_d-r_d=s .\] 
Let $G$ be the joined union graph $K_d[G_1, G_2, \ldots, G_d]$. Then 

\[ E(K_d[G_1, G_2, \ldots, G_d]) \geq E(K_d)+\sum_{i=1}^d E(G_i). \] 
\end{prop}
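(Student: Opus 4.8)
The plan is to apply Corollary \ref{cor:equal_difference} directly. By Remark \ref{rem:special_case}, the joined union $K_d[G_1,\dots,G_d]$ is exactly the join graph $G_1+\dots+G_d$, and the hypotheses on the $G_i$ here are precisely those of that corollary. Writing $n=\sum_{i=1}^d k_i$ and $r=n-s$, it tells us that $G:=K_d[G_1,\dots,G_d]$ is $r$-regular on $n$ vertices and that
\[
p_G(t)=(t-r)(t+s)^{d-1}\,\frac{\prod_{i=1}^d p_{G_i}(t)}{\prod_{i=1}^d(t-r_i)},
\]
so $\Spec(G)$ consists of $r$ (once), of $-s$ with multiplicity $d-1$, and, for each $i$, of the eigenvalues $\lambda_2^{G_i},\dots,\lambda_{k_i}^{G_i}$ of $G_i$, i.e.\ all of $\Spec(G_i)$ except $\lambda_1^{G_i}=r_i$.

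Next I would dispose of the signs. We may assume $d\geq 2$, the case $d=1$ being trivial since $E(K_1)=0$. Then $s=k_i-r_i\geq 1$ (a graph on $k_i$ vertices has valency at most $k_i-1$), and $r=n-s=r_1+\sum_{j\geq 2}k_j>0$. Hence $|r|=r$ and $|{-s}|=s$. Moreover $\lambda_1^{G_i}=r_i\geq 0$, so $\sum_{j=2}^{k_i}|\lambda_j^{G_i}|=E(G_i)-r_i$. Summing $|\mu|$ over all $\mu\in\Spec(G)$ therefore gives
\[
E(G)=r+(d-1)s+\sum_{i=1}^d\bigl(E(G_i)-r_i\bigr).
\]

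It then remains to substitute the identities $r=n-s$ and $r_i=k_i-s$ (whence $\sum_{i=1}^d r_i=n-ds$) and to recall $E(K_d)=2(d-1)$. A one-line computation yields
\[
E\bigl(K_d[G_1,\dots,G_d]\bigr)-E(K_d)-\sum_{i=1}^d E(G_i)=(n-s)+(d-1)s-(n-ds)-2(d-1)=2(d-1)(s-1)\geq 0,
\]
which is the desired inequality. I do not expect a genuine obstacle here: the only thing requiring care is the exact list of eigenvalues of $G$ with their signs and multiplicities, after which the claim collapses to the elementary identity above. In particular, equality holds precisely when $s=1$, i.e.\ when every $G_i$ is complete and hence $G=K_n$, recovering the equality case noted earlier.
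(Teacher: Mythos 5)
Your proof is correct and follows essentially the same route as the paper: apply Corollary \ref{cor:equal_difference} to read off the spectrum of the join, compute the energy difference, and reduce it to the identity $2(d-1)(s-1)\geq 0$. The only additions are your explicit sign checks and the observation about the equality case $s=1$, which the paper leaves implicit.
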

\begin{proof} 
Let $k=\sum_{i=1}^d k_i$. By Corollary \ref{cor:equal_difference}, we have 
\begin{align*}
& E(K_d[G_1, G_2, \ldots, G_d])- E(K_d)-\sum_{i=1}^d E(G_i) \\ &=(k-s)+(d-1)s-2(d-1)-\sum_{i=1}^d r_i \\ 
&=\sum_{i=1}^d (k_i-r_i)-s+(d-1)(s-2) \\
&= ds-s+(d-1)(s-2) = 2(d-1)(s-1) \geq 0.
\end{align*}
Consequently 
\[ E(K_d[G_1, G_2, \ldots, G_d]) \geq E(K_d)+\sum_{i=1}^d E(G_i). \] 
\end{proof} 

\begin{prop}
Let $G_i$ be $r_i$-regular with $k_i$ vertices. Let $s_i=k_i-r_i$ Assume further that 
\[ s_1 <s_2<\ldots <s_d .\] 
Let $G$ be the joined union graph $K_d[G_1, G_2, \ldots, G_d]$. Then 

\[ E(K_d[G_1, G_2, \ldots, G_d]) \geq 2 \sum_{i=1}^{d-1} s_i +\sum_{i=1}^d E(G_i). \] 
In particular, if $d \geq 2$ then
\[ E(K_d[G_1, G_2, \ldots, G_d]) > E(K_d) +\sum_{i=1}^d E(G_i). \] 
\end{prop}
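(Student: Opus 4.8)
The plan is to reduce everything to the negative eigenvalues of the $d\times d$ matrix $\overline A$ attached to $G=K_d[G_1,\dots,G_d]$ by Proposition \ref{prop:joined_union} (this is the same matrix $\overline A$ that appears in the proof of Proposition \ref{prop:first_estimate}). First I would record the energy decomposition. Since $\Spec(A)=\Spec(\overline A)\sqcup\{\lambda_j^{A_i}\}_{1\le i\le d,\,2\le j\le k_i}$ and $E(G_i)=r_i+\sum_{j=2}^{k_i}|\lambda_j^{A_i}|$, taking absolute values and summing gives
\[
E(G)=\sum_{\mu\in\Spec(\overline A)}|\mu|-\sum_{i=1}^d r_i+\sum_{i=1}^d E(G_i).
\]
The eigenvalues of $\overline A$ are real, since they lie in the spectrum of the real symmetric matrix $A$, and $\sum_{\mu}\mu=\Tr(\overline A)=\sum_i r_i$, so $\sum_\mu|\mu|-\sum_i r_i=2\sum_{\mu<0}|\mu|$, exactly as in the proof of Proposition \ref{prop:first_estimate}; hence $E(G)=\sum_{i=1}^d E(G_i)+2\sum_{\mu<0}|\mu|$, and it suffices to prove $\sum_{\mu<0}|\mu|>\sum_{i=1}^{d-1}s_i$.

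Next I would locate $d-1$ negative eigenvalues of $\overline A$ precisely. By the same elementary row and column operations used in the proof of Proposition \ref{prop:first_estimate} (or by the matrix determinant lemma, writing $\overline A$ as a rank-one perturbation of $-\text{diag}(s_1,\dots,s_d)$), one obtains, for every $1\le m\le d$,
\[
p_{\overline A}(-s_m)=-k_m\prod_{l\ne m}(s_l-s_m),
\]
whose sign is $(-1)^m$ because exactly $m-1$ of the $s_l$ are smaller than $s_m$. Listing the points $-s_d<-s_{d-1}<\cdots<-s_1$ in increasing order, the monic polynomial $p_{\overline A}$ takes alternating signs at consecutive points, so the intermediate value theorem produces a root $\nu_j\in(-s_{j+1},-s_j)$ for each $j=1,\dots,d-1$. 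These $d-1$ roots lie in pairwise disjoint intervals, hence are distinct eigenvalues of $\overline A$, and each satisfies $\nu_j<-s_j<0$, so $|\nu_j|>s_j$. Therefore $\sum_{\mu<0}|\mu|\ge\sum_{j=1}^{d-1}|\nu_j|>\sum_{j=1}^{d-1}s_j$, which combined with the energy decomposition yields $E(K_d[G_1,\dots,G_d])>\sum_{i=1}^d E(G_i)+2\sum_{i=1}^{d-1}s_i$, in particular the first claimed inequality.

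Finally, for the last assertion of the proposition: an $r_i$-regular graph on $k_i$ vertices has $r_i\le k_i-1$, so $s_i\ge 1$; hence when $d\ge 2$ we have $2\sum_{i=1}^{d-1}s_i\ge 2(d-1)=E(K_d)$, and the strict bound just proved gives $E(K_d[G_1,\dots,G_d])>E(K_d)+\sum_{i=1}^d E(G_i)$. I do not anticipate a genuine obstacle: the only computation is the determinant evaluation $p_{\overline A}(-s_m)=-k_m\prod_{l\ne m}(s_l-s_m)$, which is already carried out for $m=1,2$ in the proof of Proposition \ref{prop:first_estimate} and is identical in general. The one point to be careful with is that the hypothesis $s_1<s_2<\cdots<s_d$ is precisely what makes the sign alternation strict and guarantees that the $\nu_j$ are genuinely distinct and lie strictly below $-s_j$.
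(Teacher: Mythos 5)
Your proposal is correct and follows essentially the same route as the paper: the same evaluation $p_{\overline A}(-s_m)=-k_m\prod_{l\ne m}(s_l-s_m)$, the same sign-alternation/intermediate-value argument locating a root in each open interval $(-s_{j+1},-s_j)$, and the same trace identity reducing the energy gap to twice the sum of the negative eigenvalues of $\overline A$. Your bookkeeping via $2\sum_{\mu<0}|\mu|$ is marginally cleaner than the paper's (which instead isolates the single positive eigenvalue $\lambda_d$), and you are more careful than the paper in noting that the roots lie in \emph{open} intervals, which is what makes the final inequality strict.
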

\begin{proof}
Let $\{\lambda_1, \lambda_2, \ldots, \lambda_d \}$ be the eigenvalues of $\overline{A}$ where $A$ and $\overline{A}$ are the matrices in Proposition \ref{prop:join_spetrum}, namely 
\[ \overline A= 
\begin{pmatrix}
r_1 & k_2 & \cdots & k_d \\
k_1 & r_2 & \cdots & k_d \\
\vdots  & \vdots  & \ddots & \vdots  \\
k_1 & k_2 & \cdots & r_d 
\end{pmatrix} .\] 
By the same argument as in Proposition \ref{prop:first_estimate}, we have 
\[ p_{\overline{A}}(-s_i)=-k_i \prod_{j \neq i} (s_j-s_i) .\] 
Because of the total ordering $s_1<s_2<\ldots<s_d$, we see that $p_{\overline{A}}(-s_i)p_{\overline{A}}(-s_{i+1})<0$ for $1 \leq i \leq d-1.$ By the mean value theorem, $p_{\overline{A}}(t)$ has a real root, say $\lambda_i$, in the interval $[-s_{i+1}, -s_i].$ In particular, $\lambda_i<0$ and $|\lambda_i| \geq s_i$ for $1 \leq i \leq d-1$.  We also note that 
\[ \sum_{i=1}^d \lambda_i=\Tr(\overline{A})=\sum_{i=1}^d r_i .\] 
Hence 
\[ \lambda_d=\sum_{i=1}^d r_i - \sum_{i=1}^{d-1} \lambda_i >0 .\] 
We then have 
\begin{align*}
E(K_d[G_1, G_2, \ldots, G_d])-\sum_{i=1}^d E(G_i) &= \sum_{i=1}^{d} |\lambda_i| -\sum_{i=1}^d r_i \\ 
&= \sum_{i=1}^{d-1} |\lambda_i|+ \left(\sum_{i=1}^d r_i - \sum_{i=1}^{d-1} \lambda_i \right)-\sum_{i=1}^d r_i \\
&=2 \sum_{i=1}^{d-1} |\lambda_i|  \geq 2 \sum_{i=1}^{d-1} s_i.
\end{align*}
Since $1 \leq s_1<s_2<\ldots<s_d$, the above inequality implies that 
\[E(K_d[G_1, G_2, \ldots, G_d])-\sum_{i=1}^d E(G_i)> 2(d-1)=E(K_d) .\] 
\end{proof}

\section*{Acknowledgments}
This work was supported by BrainsCAN at Western University through the Canada First Research Excellence Fund (CFREF), the NSF through a NeuroNex award (\#2015276), the Natural Sciences and Engineering Research Council of Canada (NSERC) grant R0370A01, and SPIRITS 2020 of Kyoto University. J.M.~gratefully acknowledges the Western University Faculty of Science Distinguished Professorship for 2020-2021. We acknowledge the support of the Western Academy for Advance Research during the year 2022-2023. We thank Jacqueline Doan for her discussions and support at the initial stage of this work. Parts of this article were written during the workshop ``Spectral graph and hypergraph theory: connections and applications" organized by the American Institute of Mathematics. T.T.N. would like to thank the organizers of this conference and the American Institute of Mathematics for the stimulating working environment and kind hospitality. Last but not least, we thank the referees and the Editorial Board for a number of suggestions which helped us improve the exposition of the previous version of our paper. 

\bibliographystyle{plain}
\bibliography{reference}
\end{document}